\definecolor{mypink1}{rgb}{0.858, 0.188, 0.478}
\definecolor{mypink2}{RGB}{219, 48, 122}
\definecolor{mypink3}{cmyk}{0, 0.7808, 0.4429, 0.1412}
\definecolor{mygray}{gray}{0.6}
\newcommand{\I}{\mathrm{i}}
\newcommand{\D}{{d}}
\newcommand{\wh}{\widehat}
\newcommand{\lb}{\left(}
\newcommand{\ve}{\varepsilon}
\newcommand{\rb}{\right)}
\newcommand{\PD}{\partial}
\newcommand{\Rb}{\mathbb{R}}
\newcommand{\Sb}{\mathbb{S}}
\newcommand{\Beq}{\begin{equation}}
\newcommand{\Eeq}{\end{equation}}
\newcommand{\beq}{\begin{equation*}}
\newcommand{\eeq}{\end{equation*}}
\newcommand{\bal}{\begin{align}}
\newcommand{\eal}{\end{align}}
\newcommand{\bp}{\begin{prob}}
	\newcommand{\ep}{\end{prob}}
\newcommand{\bpr}{\begin{proof}}
	\newcommand{\epr}{\end{proof}}
\newcommand{\bel}[1]{\begin{equation}\label{#1}}
\newcommand{\ee}{\end{equation}}
\newtheorem{theorem}{Theorem}[section]
\newtheorem{lemma}[theorem]{Lemma}
\newtheorem{proposition}[theorem]{Proposition}
\theoremstyle{definition}
\newtheorem{definition}[theorem]{Definition}
\newtheorem{remark}[theorem]{Remark}
\title[Inverse problem for non-linear hyperbolic PDE]{Inverse Initial Boundary Value Problem for a Non-linear Hyperbolic Partial Differential Equation}
\author[Nakamura, Vashisth and Watanabe]{Gen Nakamura$^{\dagger}$, Manmohan Vashisth$^{\ddagger}$ and Michiyuki Watanabe$^{*}$} 
\address{$^{\dagger}$Department of Mathematics, Hokkaido University, Sapporo 060-0810, Japan.
	\newline\indent E-mail:{\tt \ nakamuragenn@gmail.com}}
\address{$^{\ddagger}$Beijing Computational Science Research Center, Beijing 100193, China.
	\newline
	\indent E-mail:{\tt\  manmohanvashisth@gmail.com}}
\address{$^{*}$Department of  Applied Mathematics,
	Faculty of Science,
	Okayama University of Science, Japan.
	\newline
	\indent E-mail:{\tt\  watanabe@xmath.ous.ac.jp}}
\begin{document}
	
	\begin{abstract}
		In this article we are concerned with an inverse initial boundary value problem for a non-linear wave equation in space dimension $n\geq 2$.  In particular we consider the so called interior determination problem. This non-linear wave equation has
		a trivial solution, i.e. zero solution. By linearizing this equation at the trivial solution, we have the usual linear wave equation with a time independent potential. For any small solution $u=u(t,x)$ of this non-linear equation, it is the perturbation of linear  wave equation  with time-independent potential perturbed by a divergence with respect to {$(t,x)$} of a vector whose components are quadratics with respect to $\nabla_{t,x} u(t,x)$. By ignoring the terms with smallness {$O(|\nabla_{t,x} u(t,x)|^3)$}, we will show that we can uniquely determine the potential and the coefficients of these quadratics by many boundary measurements at the boundary of the spacial domain over finite time interval {and the final overdetermination at $t=T$}. {In other words, the measurement is given by the so-called the input-output map (see \eqref{definition of NtD map}).}
	\end{abstract}
	\maketitle
	Keywords:  inverse boundary value problems, nonlinear wave equations, input-output map.\\
	\indent 2010 Mathematics Subject Classification : 35L70, 35L20, 35R30.
	\section{Introduction}\label{Introduction}
	\setcounter{equation}{0}
	\renewcommand{\theequation}{1.\arabic{equation}}
	Let $\Omega\subset\mathbb{R}^{n}$\,($n\geq 2$) be a bounded domain with smooth boundary $\partial\Omega$. For $T>0$, let $Q_T:=(0,T)\times\Omega$ and denote its lateral boundary by $\partial Q_T:=(0,T)\times\partial\Omega$.
	
	Now consider the following initial boundary value problem (IBVP):
	\begin{align}\label{equation of interest}
	\begin{aligned}
	\begin{cases}
	\partial_{t}^{2}u(t,x){-\Delta  u(t,x)}+a(x)u(t,x)={\nabla_{t,x}\cdot\overrightarrow{C}(t,x,\nabla_{t,x}u(t,x))}, \  (t,x)\in Q_T, \\
	u(0,x)=\epsilon \phi(x),\,\, \partial_{t}u(0,x)=\epsilon \psi(x),  \quad  x\in\Omega,\\
	u(t,x)=\epsilon f(t,x), \quad (t,x)\in\partial Q_T,
	\end{cases}
	\end{aligned}
	\end{align}
	where {$\nabla_{t,x}:=(\PD_{0},\partial_1,\cdots,\partial_n),\,\,\PD_{0}=\PD_{t}$ and $\partial_j=\partial_{x_j}$ for $x=(x_1,\cdots,x_n)$}. 
	Here 
	$\overrightarrow{C}(t,x,q)$ 
	is given by 
	\begin{align}\label{definition of vector C}
	\begin{aligned}
	\overrightarrow{C}({t},x,q):=\overrightarrow{P}({t},x,q)+\overrightarrow{R}({t},x,q)
	\end{aligned}
	\end{align}
	with $q:=(q_0,\widetilde q)=({q_{0}},q_{1},\cdots,q_{n})\in\mathbb{C}^{{1+}n}$, we have
	\begin{align}\label{definition of P(x,q)}
	\overrightarrow{P}({t},x,q):=\lvert q\rvert^{2}{\vec{{b}}(t,x),}
	\end{align}
	where $|q|^2$ mostly denotes $\sum_{j={0}}^{n} q_j^2$, but 
	$$
	|q|^2=\sum_{j={0}}^n q_j\overline{q_j}
	$$
	for estimates. This is because there will be cases when $q\in \mathbb{C}^{1+n}$. The meaning of this $|q|^2$ will be clear from the context.  

	Denote by $B^\infty(\partial Q_T)$ the Fr\'echet space obtained by completing  $C^\infty(\partial Q_T):=\{f\big|_{\partial Q_T}: f\in C^\infty({\mathbb R}\times\partial Q)\}$ with respect to the metric $d_{\partial}(\cdot,\cdot)$ induced by the countable norms $$\displaystyle\sup_{t\in[0,T], 0\le k\le\ell}\Vert\partial_t^k g(t,\cdot) \Vert_{C^\ell(\partial\Omega)},\,\,\ell=0,1,\cdots.
	$$
	Further, let {$m\ge [n/2]+3$ where $[n/2]$ is the largest integer not exceeding $n/2$} and $B_M:=\{(\phi,\psi,f)\in C^\infty(\overline\Omega)^2\times B^\infty(\partial Q_T):\,d_{\partial}(0,f)+d(0,\psi)+ d(0,\phi)\le M\}$ with the metric $d(\cdot,\cdot)$ in the Fr\'echet space $C^\infty(\overline\Omega)$ induced by the countable number of norms $\Vert\cdot\Vert_{C^\ell(\overline\Omega)}$, $\ell=0,1,\cdots$ and a fixed constant $M>0$.
	
	We assume that $a\in C^{\infty}(\overline\Omega)$, $\vec{{b}}(t,x):=( b_{0},b_{1},b_{2},\cdots,b_{n})\in C_{(0)}^{\infty}\lb (0,T];C^{\infty}(\overline{\Omega})\rb$ is such that $\vec{b}(T,x)=0$ in $\Omega$ and $\overrightarrow{R}({t},x,q)\in C_{(0)}^\infty((0,T]; C^\infty(\overline\Omega\times H))$ with $H:=\{q=q_R+iq_I\in {\mathbb C}^{1+n}:\,q_R,\,q_I\in{\mathbb R}^{1+n},\,|q|\le h\}$ for some
	constant $h>0$ satisfying the following estimate: there exists a constant $C>0$ such that
	\begin{equation}\label{estimate of R vector}
	{\lvert\partial_{q}^{\alpha}\nabla_{t,x}^{\beta}\overrightarrow{R}(t,x,q)\rvert\leq C\lvert q\rvert^{3-\lvert\alpha\rvert}\, \,\mbox{for multi-indices $\alpha$\  with $\lvert\alpha\rvert\leq 3$ and $\beta$},}
	\end{equation}
	where
	$\partial_q=(\partial_{q_R},\PD_{q_{I}})$  and $C_{(0)}^\infty((0,T];E)$ is a set of a Fr\'echet space $E$ valued $C^\infty$ function over $[0,T]$ flat at $t=0$. 

	Then, there exists $\epsilon_0=\epsilon_0(h, T,m, M)>0$ such that
	\eqref{equation of interest} has a unique solution $u\in X_m:=X_m([0,T])$ for any $\lb \phi,\psi,f\rb\in B_M$ satisfying the compatibility condition of order $m-1$ and $0<\epsilon<\epsilon_0$, where $X_m(I):=\cap_{j=0}^m C^j(I; W^{m-j,2}(\Omega))$ for a time interval $I$ with the $L^2(\Omega)$ based Sobolev space $W^{m-j,2}(\Omega)$ of order $m-j$. We refer this by the {\sl unique solvability} of \eqref{equation of interest}.  In Section \ref{Epsilon expansion}, we will provide the proof of this together with the $\epsilon$-expansion of solution to \eqref{equation of interest}. Because of the presence of time-dependent coefficients in \eqref{equation of interest} and the space dimension $n\geq 2$, the proof of unique solvability and $\epsilon$-expansion of solution to \eqref{equation of interest} does not follow from  \cite{Dafermos,NW} in a straight forward manner.  The $\epsilon$-expansion in \cite{NW}, proved for one space dimension will work only for time-independent coefficient case. Hence in section \ref{Epsilon expansion}, by using the ideas from \cite{Dafermos,NW} and adding the several new arguments, we will prove the unique solvability together with the $\epsilon$-expansion for solution to \eqref{equation of interest}. 

Based on the unique solvability of \eqref{equation of interest}, define the {\sl input-output map}  ${\Lambda}_{\overrightarrow{C},a}^T$ by
	\begin{align}\label{definition of NtD map}
	{{\Lambda}_{\overrightarrow{C},a}^T(\epsilon \phi,\epsilon \psi,\epsilon f)=\lb \Big[\PD_{\nu}u^{\phi,\psi,f}+ \lb 0, \nu(x)\rb\cdot\overrightarrow{C}(t,x,\nabla_{t,x}u^{\phi,\psi,f})\Big]\Big|_{\partial Q_T}, u^{\phi,\psi,f}|_{t=T},\PD_{t}u^{\phi,\psi,f}|_{t=T}\rb, \,\,}
	\end{align}
	where $0<\epsilon<\epsilon_0,$ $u^{\phi,\psi,f}(t,x)$ for $(\phi,\psi,f)\in B_M$, is the solution to $\eqref{equation of interest}$ and $\nu(x)$ is the outward unit normal vector to $\partial\Omega$ at $x\in\partial\Omega$ directed into the exterior of $\Omega$.

	The inverse problem we are going to consider is the uniqueness of identifying the potential $a=a(x)$, and the quadratic nonlinearity $\overrightarrow{P}=\overrightarrow{P}(t,x,q)$ of $\overrightarrow{C}$ from the {input-output map}
	${\Lambda}^T_{\overrightarrow{C},a}$.  More precisely it is to show the following:
	
	\medskip
	\noindent
	$$
{	\Lambda_{\overrightarrow{C}^{(1)},a_{1}}^T=\Lambda_{\overrightarrow{C}^{(2)},a_{2}}^T
	\Longrightarrow	(a_1,\,\overrightarrow{P}^{(1)})=(a_2,\,\overrightarrow{P}^{(2)})\,\,\mbox{with}\,\,\overrightarrow{P}^{(i)}=|q|^2\vec{b}^{(i)},\,\,i=1,2}
	$$
	where
	${\Lambda}_{\overrightarrow{C}^{(i)},a_{i}}^T,\,i=1,2$ are the {input-output maps} given by $\eqref{definition of NtD map}$ for $\Big( a,\overrightarrow{C}\Big)=\Big( a_{i}, \overrightarrow{C}^{(i)}\Big),\,i=1,2$ and $(a_i,\overrightarrow{P}^{(i)}),\,i=1,2$ are $(a,\overrightarrow{P})$ associated to $\lb a_{i}, \overrightarrow{C}^{(i)}\rb\,i=1,2$.

	
	The non-linear wave equation of the form \eqref{equation of interest}
	{with the assumptions  $a(x)=0$, $\overrightarrow C(t,x,q)=\overrightarrow C(x,\widetilde q)$} arises as a model equation of a vibrating string with elasticity coefficient depending on strain and a model equation describing the anti-plane deformation of a uniformly thin piezoelectric material for the one spacial dimension (\cite{NWK}), and as a model equation for non-linear Love waves for the two spacial dimension (\cite{Rus}).
	
	There are several earlier works on inverse problems for some non-linear wave equations in one space dimension. For example, 
	Denisov \cite{Denisov quasilinear} considered identifying a nonlinear potential depending on the space variable and the derivative of the solution with respect to the space variable, Grasselli \cite{Grasselli} considered identifying the speed of a wave equation arising from the nonlinear vibration of elastic string with the nonlinearity given as the speed depending on the integration of the modulus of displacement over the string, and Lorenzi-Paparoni \cite{Lorenzi Paparoni} considered identifying a nonlinear potential given as some first order derivative of a function depending on the solution of the equation arising from the theory of absorption.
	
	Under the similar set up as our inverse problem except the space dimension { and with the assumptions $a(x)=0$, $\overrightarrow C(t,x,q)=\overrightarrow C(x,\widetilde q)$ for the equation}, authors in \cite{NW,NWK} identified the time-independent coefficients by giving a reconstruction formula in one space dimension which also gives uniqueness. We are going to prove the uniqueness for our inverse problem when the space dimension $n\geq 2$ and coefficient of non-linearity $\vec{b}$ is time-dependent. Authors in  \cite{Ali_Lauri_nonlienar-wave} studied the inverse problems of determining the potential from the source to solution map for a non-linear wave equation in Riemannian geometry. Recently \cite{Hintz_Uhlmann_Zhai} considered the inverse problems for determining the coefficients of non-linearities appearing in a semilinear wave equation on Lorentzian manifold. We refer to \cite{Oksanen_Yang-Mills,Isakov Book,Kian_nonlinear_wave,Kurylev_Lassas_Uhlmann-nonlinear hyperbolic,Lassas_ICM,Lassas_Uhlmann_Wang_semilinear wave,Lassas_Tony_semilinear_2020,Wang_Zou_non-linear hyperbolic} for more works on inverse problems related to non-linear hyperbolic equations.
	
	The physical meaning of our inverse problem can be considered as a problem to identify especially the higher order tensors in non-linear elasticity for its simplified model equation. In a smaller scale the higher order tensors become important. There is a recent uniqueness result proved in  \cite{hoop,Gunther_Jian_elastic} for some nonlinear isotropic elastic equation.
	
	We also point out some related works for elliptic and parabolic equations. For elliptic equations, Kang-Nakamura in \cite{KN} studied the uniqueness for determining the non-linearity in conductivity equation. Our result can be viewed as a generalization of \cite{KN} for non-linear wave equation with constant conductivity and a potential. There are other works related to non-linear elliptic PDE, we refer to \cite{CNV,Hervas and Sun, Isakov Uniqueness,Isakov and Nachman,Krupchyk_Uhlmann_partial data-gradient,Nakamura Sun,Sun Uhlmann,Sun Conjecture,Sun Semilinear}. Also, for nonlinear parabolic equations, we refer to \cite{Caro_Kian_parabolic,COY,Isakov Uniqueness,Klibanov}.

\medskip
Now we state the main result.
	\begin{theorem}\label{Uniqueness theorem}
		For $i=1,2$, let 
		\begin{align*}
		\overrightarrow{C}^{(i)}(t,x,q)=q+\overrightarrow{P}^{(i)}(t,x,q)+\overrightarrow{R}^{(i)}(t,x,q)  \ \mbox{and} \  \overrightarrow{P}^{(i)}(t,x,q)=\lvert q\rvert^{2} \vec{{b}}^{(i)}(t,x)
		\end{align*}
		with $\overrightarrow{P}^{(i)}\ \text{and} \ \overrightarrow{R}^{(i)},\,i=1,2$ satisfying the same conditions as for $\overrightarrow{P}\ \text{and}\ \overrightarrow{R}$. Further let $u^{(i)}\in X_m,\,i=1,2$ be the solutions to the following IBVP:
		\begin{align}\label{equation for ui}
		\begin{aligned}
		\begin{cases}
		\partial_{t}^{2}u^{(i)}(t,x)-{\Delta  u^{(i)}(t,x)}+a_{i}(x)u^{(i)}(t,x)={\nabla_{t,x}\cdot\overrightarrow{C}^{(i)}(t,x,\nabla_{t,x}u^{(i)}(t,x)),} \   (t,x)\in Q_T, \\
		u^{(i)}(0,x)=\epsilon\phi(x),\ \partial_{t}u^{(i)}(0,x)=\epsilon \psi(x),\ \  x\in\Omega,\\
		u^{(i)}(t,x)=\epsilon f(t,x), \quad (t,x)\in\partial Q_T
		\end{cases}
		\end{aligned}
		\end{align}
		with $\lb \phi,\psi,f\rb\in B_{M}$ satisfying the compatibility condition of order $m-1$ and any $0<\epsilon<\epsilon_0$.
		 Let ${\Lambda}_{\overrightarrow{C}^{(1)},a_{1}}^T$ and ${\Lambda}_{\overrightarrow{C}^{(2)},a_{2}}^T$  be the input-output maps as defined in $\eqref{definition of NtD map}$ corresponding to $u^{(1)}$ and $u^{(2)}$, respectively. Assume that $T$ is larger than the diameter of $\Omega$ and
		\begin{align}\label{equality of NtD map}
		{\Lambda}_{\overrightarrow{C}^{(1)},a_{1}}^T(\epsilon\phi,\epsilon\psi,\epsilon f)={\Lambda}_{\overrightarrow{C}^{(2)},a_{2}}^T(\epsilon\phi,\epsilon\psi,\epsilon f),\,\,(\phi,\psi,f)\in B_M,\,\,0<\epsilon<\epsilon_0.
		\end{align}
		Then we have
		\begin{align*}
		a_{1}(x)=a_{2}(x),\,\, x\in\Omega\,\,\mbox{and}\,\,\, \vec{b}^{(1)}(t,x)=\vec{b}^{(2)}(t,x),\,\,(t,x)\in Q_{T}.
		\end{align*}
	\end{theorem}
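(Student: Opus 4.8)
The plan is to use the $\epsilon$-expansion of the solution (established in Section~\ref{Epsilon expansion}), differentiate the hypothesis \eqref{equality of NtD map} in $\epsilon$ at $\epsilon=0$, and extract information order by order. Write $u^{(i)}=\epsilon u^{(i)}_{1}+\epsilon^{2}u^{(i)}_{2}+O(\epsilon^{3})$. At first order, $u^{(i)}_{1}$ solves the linearized problem, i.e. a linear wave equation with time-independent potential determined by $a_{i}$ and data $(\phi,\psi,f)$, and the $O(\epsilon)$ part of the measured conormal datum is a fixed multiple of the linear Neumann trace $\partial_{\nu}u^{(i)}_{1}$, the quadratic correction $|\nabla_{t,x}u^{(i)}|^{2}\,\nu\cdot(b^{(i)}_{1},\dots,b^{(i)}_{n})$ entering only at order $\epsilon^{2}$. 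Hence \eqref{equality of NtD map} forces $u^{(1)}_{1}$ and $u^{(2)}_{1}$ to share the Dirichlet data $f$, the Cauchy data at $t=0$, the Neumann trace on $\partial Q_{T}$, and the Cauchy data at $t=T$; varying $(\phi,\psi,f)$ over $B_{M}$ and rescaling, the hyperbolic Dirichlet-to-Neumann maps of $\partial_{t}^{2}-\Delta+a_{1}$ and $\partial_{t}^{2}-\Delta+a_{2}$ on $(0,T)\times\partial\Omega$ coincide. Since $T>\operatorname{diam}\Omega\ge 2\sup_{x\in\Omega}\operatorname{dist}(x,\partial\Omega)$, the classical uniqueness theorem for recovering a time-independent potential from the hyperbolic Dirichlet-to-Neumann map (for instance by the boundary control method) gives $a_{1}=a_{2}=:a$, and then uniqueness for the linear IBVP gives $u^{(1)}_{1}=u^{(2)}_{1}=:u_{1}$.

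Next I would pass to second order. Put $v:=u^{(1)}_{2}-u^{(2)}_{2}$ and $\vec c:=\vec b^{(1)}-\vec b^{(2)}$. Collecting the $\epsilon^{2}$ terms,
\begin{align*}
(\partial_{t}^{2}-\Delta+a)v=\nabla_{t,x}\cdot\bigl(|\nabla_{t,x}u_{1}|^{2}\,\vec c(t,x)\bigr)\ \text{in }Q_{T},\qquad v|_{t=0}=\partial_{t}v|_{t=0}=0,\qquad v|_{\partial Q_{T}}=0,
\end{align*}
while \eqref{equality of NtD map} at order $\epsilon^{2}$ adds the final overdetermination $v|_{t=T}=\partial_{t}v|_{t=T}=0$ together with a prescribed identity on $\partial Q_{T}$ relating $\partial_{\nu}v$ to $|\nabla_{t,x}u_{1}|^{2}\,\nu\cdot(c_{1},\dots,c_{n})$. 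Pairing $v$ with an arbitrary solution $w$ of $(\partial_{t}^{2}-\Delta+a)w=0$ in $Q_{T}$ by Green's identity, the $t=0$ and $t=T$ boundary terms vanish because $v$ has zero Cauchy data on both temporal faces (this is where the final overdetermination enters), the lateral terms are rewritten using $v|_{\partial Q_{T}}=0$ and the boundary identity above, and the source is integrated by parts — its contributions on $\{t=0\}$ and $\{t=T\}$ vanish since $\vec b^{(i)}$ is flat at $t=0$ and $\vec b^{(i)}(T,\cdot)=0$, hence $\vec c|_{t=0}=\vec c|_{t=T}=0$. One is left with an identity of the schematic form
\begin{align*}
\int_{Q_{T}}|\nabla_{t,x}u_{1}|^{2}\,\vec c\cdot\nabla_{t,x}w\,dx\,dt=\mathrm{const}\cdot\int_{(0,T)\times\partial\Omega}|\nabla_{t,x}u_{1}|^{2}\,\bigl(\nu\cdot(c_{1},\dots,c_{n})\bigr)\,w\,dS\,dt,
\end{align*}
valid for all solutions $u_{1},w$ of the homogeneous linear equation and, by the quadratic/linear homogeneity in $u_{1}$ and $w$, for all such solutions regardless of size.

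Finally I would insert geometric-optics solutions. Taking $u_{1}$ to be a WKB / Gaussian-beam solution with large parameter $\lambda$ concentrated along a null geodesic, and a companion $w$ whose phase cancels the oscillation of $|\nabla_{t,x}u_{1}|^{2}\,\nabla_{t,x}w$ (and, where needed, vanishes on the lateral boundary), the leading-order stationary-phase analysis as $\lambda\to\infty$ collapses the identity above to the vanishing of a weighted light-ray transform of $\vec c$ along the null geodesic segments of $Q_{T}$, modulo a residual boundary term carried by $\nu\cdot(c_{1},\dots,c_{n})$. Inverting this, the interior information pins down $\vec c$ up to $\vec c=\nabla_{t,x}\phi$ with $\phi$ constant on $\partial(Q_{T})$ — here one uses $\vec c|_{t=0}=\vec c|_{t=T}=0$ and $T>\operatorname{diam}\Omega$ to ensure that null rays link all the relevant parts of the boundary — while the boundary term is exactly what forces $\phi$ to be globally constant, i.e. $\vec c\equiv 0$; thus $\vec b^{(1)}=\vec b^{(2)}$ on $Q_{T}$.

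The hard part is this last step. The assumption $T>\operatorname{diam}\Omega$ is only slightly more than the bare minimum, so the maximal null geodesics of $Q_{T}$ necessarily meet the lateral boundary; one therefore cannot simply discard the boundary integrals produced by the geometric-optics solutions, and they must be controlled through the measured conormal datum. Equally delicate is closing the gauge gap — upgrading ``$\vec c$ is a gradient'' to ``$\vec c=0$'' — and it is at precisely these two points that the final overdetermination at $t=T$, the explicit form of the conormal measurement in \eqref{definition of NtD map}, and the vanishing/flatness of $\vec b^{(i)}$ at the temporal ends are all used. The first-order step, by contrast, is essentially a citation.
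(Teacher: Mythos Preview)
Your first-order step is fine and matches the paper. The second-order step has two genuine gaps.

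First, your ``schematic'' integral identity is wrong: the right-hand side is \emph{zero}, not a surviving lateral boundary integral. When you integrate by parts on both sides, Green's identity on $v$ produces $-\int_{\partial Q_T}\partial_\nu v\,w$, while integrating the divergence source by parts produces $+\int_{\partial Q_T}(0,\nu)\cdot\vec c\,|\nabla_{t,x}u_1|^2\,w$; the second-order part of the measured conormal datum in \eqref{definition of NtD map} says precisely that $\partial_\nu v+(0,\nu)\cdot\vec c\,|\nabla_{t,x}u_1|^2=0$ on $\partial Q_T$, so the two lateral contributions cancel exactly. This is the whole reason the map \eqref{definition of NtD map} is defined with that extra flux term. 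Consequently your closing paragraph worries about a non-issue: there is no residual boundary integral to control, and the null rays meeting $\partial\Omega$ cause no trouble because $\beta_w:=\chi_{Q_T}\vec c\cdot\nabla_{t,x}w$ is simply extended by zero to $\mathbb R^{1+n}$.

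Second, you are missing the polarization step and, as a result, are led to a spurious ``gauge'' discussion. The paper replaces $u_1$ by $u_1^{\phi_1\pm\phi_2,\psi_1\pm\psi_2,f_1\pm f_2}$ and subtracts, turning $|\nabla_{t,x}u_1|^2$ into the bilinear expression $\nabla_{t,x}u_1^{(1)}\cdot\nabla_{t,x}u_1^{(2)}$; then it inserts \emph{real} semiclassical solutions $e^{\pm(t+x\cdot\omega)/h}(\varphi+hR)$ for $u_1^{(1)},u_1^{(2)}$ (reality matters, since $u_1$ feeds back into the coefficients of the quasilinear remainder). The limit $h\to0$ gives that the light-ray transform of the \emph{scalar} $\beta_w$ vanishes for every $\omega\in\mathbb S^{n-1}$, and Fourier--slice plus Paley--Wiener forces $\beta_w\equiv0$ with no gauge ambiguity whatsoever. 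Only then does one vary $w$: the paper exhibits $n{+}1$ WKB solutions $w_0,\dots,w_n$ with $\det(\partial_i w_j)\neq0$ a.e., so $\vec c\cdot\nabla_{t,x}w_j=0$ for all $j$ yields $\vec c=0$ directly. Your proposed route --- a single WKB $u_1$, a companion $w$ to cancel phases, then inverting a ``light-ray transform of $\vec c$'' modulo gradients --- does not match this and, as written, does not close: you never obtain an honest light-ray transform of $\vec c$, and the ``upgrade from gradient to zero'' you flag as delicate is an artifact of the wrong framework rather than a real obstruction.
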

	\begin{remark}
	Note that in Theorem \ref{Uniqueness theorem} the coefficient of non-linearity $\vec{b}$ is time-dependent, hence our measurement is the input-output map which consists of the usual hyperbolic Dirichlet to Neumann map and the information of solution measured at the initial and final time. Due to this extra information of solutions, we can derive the integral identity given by \eqref{Integral identity before GO} in Section \ref{Proof of uniqueness Theorem}, for any solution $w$ to $\PD_{t}^{2}w-\Delta w+a(x)w=0$ and hence we can prove  Lemma \ref{Lemma about LI of solutions},  which immediately gives the  uniqueness of $\vec{b}(t,x)$. Recently there has been several works in the literature related to inverse problems for non-linear hyperbolic equations, but most of them showed the determination of time-independent coefficients of non-linearity from the boundary measurements. The determination of the time-dependent coefficients appearing in a non-linear hyperbolic partial differential equations from the boundary measurements has not been well studied in the prior works and this is the aim of the present work. To the best of our knowledge this is the first result which deals with the determination of the time-dependent coefficient of a quadratic non-linearity appearing in a non-linear hyperbolic partial differential equations from boundary measurements of the solution.
	\end{remark}
	The proof of Theorem \ref{Uniqueness theorem} will be done in two steps. Namely we first show that from $$\Lambda_{\overrightarrow{C}^{(1)},a_{1}}^T(\epsilon\phi,\epsilon\psi,\epsilon f)=\Lambda_{\overrightarrow{C}^{(2)},a_{2}}^T(\epsilon\phi,\epsilon\psi,\epsilon f),\,\,\,(\phi,\psi,f)\in B_M$$ we can have $a=a_1=a_2$ and $a$ can be reconstructed from one of $\Lambda_{a_i}^T$ which is the linearization of input-output map  $\Lambda_{\overrightarrow{C}^{(i)},a_{i}}$ defined in \eqref{definition of NtD map} and  given by
	$$
	\Lambda_{a_i}^T (\phi,\psi,f)={\lb \nu(x)\cdot\nabla_x u_1^{(i)\phi,\psi,f}\Big|_{\PD Q_{T}},u_{1}^{(i)}\Big|_{t=T},\PD_{t}u_{1}^{(i)}\Big|_{t=T}\rb},\,\ (\phi,\psi,f)\in B_{M}
	$$
	where $u_1^{(i)\phi,\psi,f}\in X_m$ is the solution to the initial boundary value problem:
	\begin{equation}\label{u_1^f}
	\begin{aligned}
	\begin{cases}
	\partial_t^2 v(t,x)-\Delta  v(t,x)+a_{i}(x)v(t,x)=0,\,\,(t,x)\in Q_T,\\
	v(0,x)=\phi(x),\,\,\PD_{t}v(0,x)=\psi(x),\,\,x\in\Omega,\\
	v(t,x)=f(t,x),\,\,(t,x)\in\partial Q_T.
	\end{cases}
	\end{aligned}
	\end{equation}
 Using the reconstruction for $a(x)$ and  varying the  initial and Dirichlet data for \eqref{u_1^f}, we can know the solution $v(t,x)$ of \eqref{u_1^f} in $Q_{T}$. Now using the uniqueness for $a_{i}$ in $\Omega$, we have the corresponding solutions to the linearized problem \eqref{u_1^f} are equal. This will help us to derive an integral identity involving $\vec{b}$.  Finally using \eqref{equality of NtD map} and  the special solutions for the linearized equation \eqref{u_1^f}, we prove the unique identification of $\vec{b}(t,x)$. We remark here that since our arguments for identifying $\vec{b}$ require the reconstruction for the lower order coefficient  therefore we have assumed that $a$ is time-independent. 

	\medskip
	The rest of this paper is organized as follows. In Section 2, we will introduce the $\epsilon$-expansion of the IBVP and  analyze the input-output map in $\epsilon$-expansion. As a consequence, we will show that the input-output map determines the input-output map  $\Lambda_{a}^{T}$  associated with the equation $\partial_t^2 v-\Delta_x v+av=0$ in $Q_{T}$. This immediately implies the uniqueness of identifying $a$. Section 3 is devoted to proving the uniqueness of identifying  { $\vec{b}(t,x)$, the coefficient of quadratic non-linearity with respect to $\nabla_{t,x}u$}.
	
	\section{$\epsilon$-expansion of the solution to the IBVP and input-output map in $\epsilon$-expansion}\label{Epsilon expansion}
	\setcounter{equation}{0}
	\renewcommand{\theequation}{2.\arabic{equation}}
	
	In this section, we provide the $\epsilon$-expansion, which will also provide the proof of the unique solvability of \eqref{equation of interest}. For the case when space dimension is one and the coefficients are time independent, there is a brief proof of the verification of the $\epsilon$-expansion given in \cite{NW}. Although in principle the idea of proof is the same as in \cite{NW}, it becomes more complicated and needs to add further arguments for the case when the space dimension becomes higher and the coefficients are time-dependent. Hence we will give the full proof verifying the $\epsilon$-expansion.
	
	\begin{theorem}\label{Unique solvability of direct problem non-linear}
		Let {$m\ge [n/2]+3$} and $(\phi,\psi,f)\in B_{M}$ 
		with a fixed constant $M>0$, then for given $T>0$, there exists $\epsilon_0=\epsilon_0(h, m, M)>0$ such that for any $0<\epsilon< \epsilon_{0}$,
		\eqref{equation of interest} has a unique solution 
		$u\in X_{m}$, where $h$,$B_M$ and  $X_m$ were defined in Section \ref{Introduction}. Moreover, it admits an expansion which we call $\epsilon$-expansion:
		\begin{equation}
		u=\epsilon u_{1}+\epsilon^{2} u_{2}+O(\epsilon^{3}),\,\,\ \epsilon\rightarrow 0,
		\end{equation}
		where $u_{1}$ is a solution to
		\begin{align}\label{equation for u1 non-linear}
		\begin{aligned}
		\begin{cases}
		\partial_{t}^{2}u_{1}(t,x)-\Delta u_{1}(t,x)+a(x)u_{1}(t,x)=0, \ (t,x)\in Q_{T},\\
		u_{1}(0,x)=\phi(x),\,\,\partial_{t}u_{1}(0,x)=\psi(x), \   x\in\Omega, \\
		u_{1}(t,x)=f(t,x),\   (t,x)\in\partial Q_{T},
		\end{cases}
		\end{aligned}
		\end{align}
		and $u_{2}$ is a solution to 
		\begin{align}\label{equation for u2 non-linear}
		\begin{aligned}
		\begin{cases}
		\partial_{t}^{2}u_{2}(t,x)-\Delta u_{2}(t,x)+a(x)u_{2}(t,x)={\nabla_{t,x}\cdot \lb \vec{b}(t,x)\lvert\nabla_{t,x}u_{1}(t,x)\rvert^{2}\rb}, \  (t,x)\in Q_{T},\\
		u_{2}(0,x)=\partial_{t}u_{2}(0,x)=0,\ x\in\Omega,\\
		u_{2}(t,x)=0,\ (t,x)\in \partial Q_{T}
		\end{cases}
		\end{aligned}
		\end{align}
		and  O$\lb\epsilon^{3}\rb$ means the following:
		\begin{align*}
		\begin{aligned}
		w(t, x)=\mbox{O}(\epsilon^{3})\Longleftrightarrow\Vert w\Vert_{X_m}:=\sup_{0\leq t\leq T}\lb\sum_{k=0}^{m}\lVert \overset{(k)}{w}(t,.)\rVert_{m-k}^{2}\rb^{1/2}=\mbox{O}(\epsilon^{3}),
		\end{aligned}
		\end{align*}
		where $\overset{(k)}{w}:=\frac{\partial^k  w}{\partial t^k}$ and $\Vert\cdot\Vert_{k}$ is the norm of the $L^{2}(\Omega)$ based Sobolev space $ W^{k,2}(\Omega)$ of order $k$. 
	\end{theorem}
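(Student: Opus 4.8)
The idea is to rescale $u=\epsilon v$ and then view \eqref{equation of interest} as a quasilinear hyperbolic mixed problem whose principal part is, in the $X_m$-topology, an $O(\epsilon)$ perturbation of the linear wave operator $\partial_t^2-\Delta$. Unique solvability on the whole interval $[0,T]$, with a bound uniform in $\epsilon$, then follows from the standard iteration scheme for quasilinear hyperbolic mixed problems in the spirit of \cite{Dafermos,NW}, and the $\epsilon$-expansion is obtained by inserting the ansatz and estimating the remainder with the linear a priori inequality. Using \eqref{definition of vector C}, \eqref{definition of P(x,q)} and the homogeneity bound \eqref{estimate of R vector}, the rescaled unknown $v$ solves
\beq
\partial_t^2 v-\Delta v+av=\epsilon\,\nabla_{t,x}\cdot\big(\vec b(t,x)\,|\nabla_{t,x}v|^2\big)+\epsilon^{-1}\,\nabla_{t,x}\cdot\overrightarrow{R}\big(t,x,\epsilon\nabla_{t,x}v\big)\quad\text{in }Q_T,
\eeq
with the $\epsilon$-independent data $v(0,\cdot)=\phi$, $\partial_t v(0,\cdot)=\psi$, $v|_{\partial Q_T}=f$. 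Performing the two divergences and moving to the left every term that contains a second derivative of $v$, this equation takes the form $\partial_t^2 v-\sum_{j,k}g^{jk}_\epsilon[v]\,\partial_j\partial_k v+(\text{first order in }v)=G_\epsilon[v]$, where $g^{jk}_\epsilon[v]=\delta^{jk}+O(\epsilon\Vert v\Vert_{X_m})$ (the $\overrightarrow{R}$-contribution to $g^{jk}_\epsilon[v]$ being even $O(\epsilon^2)$ by \eqref{estimate of R vector}) and $G_\epsilon[v]$ is quadratic and higher order in $\nabla_{t,x}v$ with no second derivatives; hence for $\epsilon$ small and $v$ in a fixed ball of $X_m$ the principal part stays uniformly strictly hyperbolic and close to $\partial_t^2-\Delta$, while $G_\epsilon[v]$ loses only the single Sobolev derivative that the wave operator returns.

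The first ingredient is the linear theory for the mixed problem $\partial_t^2 w-\sum_{j,k}\widetilde g^{jk}\partial_j\partial_k w+(\text{l.o.t.})=F$ in $Q_T$ with initial data $(\phi,\psi)$ and lateral data $f$, when $(\widetilde g^{jk})$ is a sufficiently small perturbation of $(\delta^{jk})$ in $X_m$, the triple $(\phi,\psi,f)$ satisfies the compatibility condition of order $m-1$, and $F$ is flat at $t=0$. By the classical energy method for hyperbolic mixed problems — tangential and time differentiation, elliptic regularity in the normal direction to recover the full $W^{m,2}(\Omega)$-regularity, and the corner compatibility conditions making the formal time derivatives $\partial_t^k w(0,\cdot)$ well defined in $W^{m-k,2}(\Omega)$ — one obtains a unique $w\in X_m$ and an estimate of the form
\beq
\Vert w\Vert_{X_m}\le C\,e^{CT}\Big(\Vert\phi\Vert_m+\Vert\psi\Vert_{m-1}+d_\partial(0,f)+\int_0^T\Vert F(s,\cdot)\Vert_{m-1}\,ds\Big),
\eeq
with $C$ depending only on $n$, $m$, $T$ and the smallness of $\widetilde g^{jk}-\delta^{jk}$. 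Combined with the Moser-type product and composition estimates — for which the embedding $W^{m-2,2}(\Omega)\hookrightarrow L^\infty(\Omega)$, i.e.\ $m\ge[n/2]+3$, is exactly what is needed — this yields in particular the unique solvability of \eqref{equation for u1 non-linear} and \eqref{equation for u2 non-linear}; the source $\nabla_{t,x}\cdot(\vec b\,|\nabla_{t,x}u_1|^2)$ of \eqref{equation for u2 non-linear} is flat at $t=0$ because $\vec b$ is, hence compatible with the vanishing data of $u_2$.

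With this machinery, the nonlinear problem is solved by the usual iteration: set $v^{(0)}:=u_1$, and let $v^{(N+1)}$ solve, with the same data $(\phi,\psi,f)$, the linear mixed problem obtained by freezing the coefficients $g^{jk}_\epsilon[v^{(N)}]$ on the left and evaluating $G_\epsilon$ at $v^{(N)}$. The a priori estimate shows that, for $\epsilon<\epsilon_0(h,m,M)$, the iterates stay in a fixed ball of $X_m$ and are Cauchy in the lower norm of $X_{m-1}$; the limit $v$ then lies in $X_m$ and solves the rescaled equation. What makes the iteration close is precisely that the quasilinear coefficients deviate from $\delta^{jk}$ only by $O(\epsilon)$ (and the part coming from $\overrightarrow{R}$ only by $O(\epsilon^2)$ by \eqref{estimate of R vector}). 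The flatness of $\vec b$ and $\overrightarrow{R}$ at $t=0$ is essential here: it keeps the source flat at $t=0$ at every step, so that $(\phi,\psi,f)$ goes on satisfying the compatibility condition of order $m-1$ for the frozen problem and the iteration stays inside $X_m$. Uniqueness in $X_m$ follows from a standard energy estimate in a lower norm for the difference of two solutions, together with Gronwall's inequality; undoing the scaling gives the unique solvability of \eqref{equation of interest}, and in particular $\Vert u\Vert_{X_m}=O(\epsilon)$.

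For the $\epsilon$-expansion, put $w:=u-\epsilon u_1-\epsilon^2 u_2$ with $u_1$, $u_2$ the solutions of \eqref{equation for u1 non-linear}, \eqref{equation for u2 non-linear}; subtracting \eqref{equation for u1 non-linear} and $\epsilon^2$ times \eqref{equation for u2 non-linear} from \eqref{equation of interest}, one sees that $w$ solves a mixed problem with zero initial and lateral data, with the same small quasilinear principal part as above, and with a source which — using $\nabla_{t,x}u=O(\epsilon)$ in $X_{m-1}$ and \eqref{estimate of R vector}, and keeping the second-order terms on the left — is of size $O(\epsilon^3)$ plus terms $O(\epsilon\Vert w\Vert_{X_m})+O(\Vert w\Vert_{X_m}^2)$. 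Since the same argument applied first to $u-\epsilon u_1$ gives $\Vert w\Vert_{X_m}=O(\epsilon^2)$, the linear a priori estimate upgrades this to $\Vert w\Vert_{X_m}\le C\epsilon^3$, which is the asserted expansion $u=\epsilon u_1+\epsilon^2 u_2+O(\epsilon^3)$. The step I expect to be the main obstacle is the linear mixed-problem theory in the high-regularity spaces $X_m$ with genuinely time-dependent, merely $X_m$-regular coefficients — propagating the corner compatibility conditions, running the tangential and normal energy estimates without loss of derivatives, and arranging the Moser and commutator estimates so that the quasilinear divergence-form nonlinearity costs no derivatives — none of which follows from the one-dimensional, time-independent treatment of \cite{NW}. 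The remaining ingredients (the contraction for small $\epsilon$, the uniqueness, and the remainder estimate for the expansion) are then routine, thanks to the smallness of the nonlinear perturbation.
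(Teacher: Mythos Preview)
Your proposal is correct and follows the standard quasilinear-hyperbolic template; it differs from the paper's argument mainly in organisation rather than in substance. The paper does \emph{not} first prove existence of $u$ and then estimate the remainder; instead it writes from the outset $u=\epsilon\{u_1+\epsilon(u_2+w)\}$ and reduces everything to proving that the remainder $w$ exists in $X_m$ with $\Vert w\Vert_{X_m}=O(\epsilon)$ (their Theorem~\ref{Existence of w epsilon}). Thus existence of $u$ and the $\epsilon$-expansion are obtained in a single stroke, whereas you first solve for $v=u/\epsilon$ by iteration and then run a separate bootstrap ($u-\epsilon u_1=O(\epsilon^2)$, then $u-\epsilon u_1-\epsilon^2 u_2=O(\epsilon^3)$). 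Their fixed-point scheme is also structured differently: rather than iterating the full quasilinear problem, they freeze only the principal-part coefficients at a function $U\in Z(M)$, solve the resulting \emph{semilinear} problem (Proposition~\ref{Existence of solution to semi-linear equation}) by a Picard iteration in $X_m$, and then show that the map $U\mapsto w_{sem}$ is a contraction for the low-norm metric $\rho(f,g)=\sup_t(\Vert f-g\Vert_1^2+\Vert\partial_t f-\partial_t g\Vert_0^2)^{1/2}$. Your single-level iteration (bounded in $X_m$, Cauchy in $X_{m-1}$) is the more familiar route and avoids this nesting; the paper's two-level scheme isolates the elliptic/coercivity lemma (their Lemma~\ref{elliptic part}) and the composition estimate (Lemma~\ref{lem:composite}) more explicitly, and makes the energy identity \eqref{eqn:e-i} with its treatment of the mixed $\partial_t\partial_{x_j}$ terms the visible technical core --- exactly the ``main obstacle'' you correctly anticipate.
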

	\begin{remark}\label{solvablility with nonzero D-data}${}$
		\begin{enumerate}
		\item \label{wellposedness for u1} For the well-posedness of initial boundary value problem \eqref{equation for u1 non-linear} we have the following. Let $m\in{\mathbb N}$ and let $\phi\in W^{m,2}(\Omega),\,\psi\in W^{m-1,2}(\Omega)$, $f\in \widetilde X_m$ satisfy the compatibility condition of order $m-1$. Then there exists a unique solution $u_1\in X_m$ to \eqref{equation for u1 non-linear} with the estimate
		$$ \Vert u_1\Vert_{X_m}\le C(\Vert\phi\Vert_{W^{m,2}(\Omega)}+\Vert\psi\Vert_{W^{m-1,2}(\Omega)}+\Vert f\Vert_{\widetilde X_m}),
		$$
		where $C>0$ is a general constant and $\Vert f\Vert_{\widetilde X_m}:=\sup_{0\le t\le T}\lb\sum_{j=0}^m\Vert\overset{(j)}f(t,\cdot)\Vert^2_{W^{m-j,2}(\partial\Omega)}\rb^{1/2}$ is the norm of $f$ in the space $\widetilde X_m:=\cap_{j=0}^m C^j([0,T];W^{m-j,2}(\partial\Omega))$. This can be proved by starting from $m=1$ given in Theorem 2.45 of \cite{KKL} and argue as in the arguments given from \eqref{key estimate} in Subsection \ref{Solvabilty for semilinear equation} to the end of this subsection. 
		\newline

		\item \label{wellposedness for u2} For the well-posedness of initial boundary value problem \eqref{equation for u2 non-linear} with a 
		general inhomogeneous term $F=F(t,x)$ instead of ${\nabla_{t,x}\cdot \overrightarrow{P}(t,x,\nabla_{t,x}u_{1})}$, we have the following. Let $m\in{\mathbb N}$ and let $F\in X_{m-1}$ satisfy the compatibility condition of order $m-1$. Then there exists a unique solution $u_2\in X_m$ to \eqref{equation for u2 non-linear} with the estimate
		$$\Vert u_2\Vert_{X_m}\le C\Vert F\Vert_{X_{m-1}},$$
		where $C>0$ is a general constant.
		This can be proved by referring \cite{Dafermos} and \cite{Evans}.
		\newline
	\item \label{wellposedness for u} The compatibility condition of order $m-1$ given in \eqref{wellposedness for u1} and \eqref{wellposedness for u2} are that considered independently from \eqref{equation of interest}. Nevertheless, in relation with \eqref{equation of interest}, if we want to have the solution $u$ of \eqref{equation of interest} to be in $ X_m$, then the compatibility conditions for both \eqref{equation for u1 non-linear} and \eqref{equation for u2 non-linear} are of the same order $m-1$ with $m\ge [n/2]+3$. This is due to the assumption we made for $\vec{b}$ and $\overrightarrow R$.
		\end{enumerate}
	\end{remark}

	
	Our strategy for the proof of Theorem \ref{Unique solvability of direct problem non-linear} is as follows: 
	\begin{itemize}
		\item We look for a solution $u(t,x)$ to \eqref{equation of interest} of the form
		\begin{equation}\label{form of u}
		\begin{aligned}
		u(t,x):=\epsilon\left\{u_{1}(t,x)+\epsilon\left(u_{2}(t,x)+w(t,x)\right)\right\},
		\end{aligned}
		\end{equation}
		where $u_1$, $u_2$ are the solutions to the initial boundary value problems \eqref{equation for u1 non-linear} and \eqref{equation for u2 non-linear}, respectively, 
		and derive the equation for $w$, which has the form 
		\[\partial_{t}^{2}w-B(w(t))w=\epsilon\mathcal{F}(t,x,{\nabla_{t,x}}w;\epsilon)\]
		{(see \eqref{Semilinear for w} and \eqref{mathcal F})}.
		\item For a given function $U(t)$, we prove the unique solvability of the semilinear wave equation of the form: 
		\[\partial_{t}^{2}w_{sem}-B(U(t))w_{sem}=\epsilon\mathcal{F}(t,x,{\nabla_{t,x}}w_{sem};\epsilon)\]
		with zero initial and boundary data.
		\item We prove that the map $T(U)=w_{sem}$ is a contraction mapping.
	\end{itemize}

\medskip
We first derive the equation for $w$. Direct computations show that  $w(t,x)$ has to satisfy
	\begin{align}
	\begin{aligned}\label{equation of w(t,x)}
	\begin{cases}
	\partial_{t}^{2}w-\Delta w+a(x)w=\epsilon^{-2}\nabla_{t,x}\cdot \overrightarrow{R}(t,x,\epsilon\nabla_{t,x}u_{1}+\epsilon^{2}\nabla_{t,x}u_{2}+\epsilon^{2}\nabla_{t,x}w)\\
	\ \ \ 	{+2\epsilon \nabla_{t,x}\cdot\lb \nabla_{t,x}u_1\cdot \nabla_{t,x}u_{2}\,\vec{b}\rb+2\epsilon \nabla_{t,x}\cdot\lb \nabla_{t,x}u_{1}\cdot \nabla_{t,x}w\,\vec{b}\rb}\\
	\ \ \ {+\epsilon^{2}\nabla_{t,x}\cdot\lb \lb \lvert\nabla_{t,x}u_{2}\rvert^{2}+2\nabla_{t,x}u_{2}\cdot\nabla_{t,x}w+\lvert\nabla_{t,x}w\rvert^{2}\rb\vec{b}\rb},\,\,\,  (t,x)\in Q_{T},\\
	w(0,x)=\partial_{t}w(0,x)=0,\ x\in\Omega,\\ \ w(t,x)=0,\,\, (t,x)\in\PD Q_{T}.
	\end{cases}
	\end{aligned}
	\end{align}
	By the mean value theorem, we have
	\begin{align*}
	\overrightarrow{R}(t,x,\epsilon\nabla_{t,x}u_{1}+\epsilon^{2}\nabla_{t,x}u_{2}+\epsilon^{2}\nabla_{t,x}w)&=\overrightarrow{R}(t,x,\epsilon\nabla_{t,x}u_{1}+\epsilon^{2}\nabla_{t,x}u_{2})\\
	&\ \ \ \ \ +\int\limits_{0}^{1}\frac{d}{d\theta}\overrightarrow{R}(t,x,\epsilon\nabla_{t,x}u_{1}+\epsilon^{2}\nabla_{t,x}u_{2}+\theta\epsilon^{2}\nabla_{t,x}w)d\theta\\
	&=\overrightarrow{R}(t,x,\epsilon\nabla_{t,x}u_{1}+\epsilon^{2}\nabla_{t,x}u_{2})+\epsilon^{3} K(t,x,\epsilon\nabla_{t,x}w;\epsilon)\nabla_{t,x}w,
	\end{align*}
	where 
	\begin{align*}
	\epsilon K(t,x,\epsilon\nabla_{t,x}w;\epsilon):=\int\limits_{0}^{1}\nabla_q\overrightarrow{R}(t,x,\epsilon\nabla_{t,x}u_{1}+\epsilon^{2}\nabla_{t,x}u_{2}+\theta\epsilon^{2}\nabla_{t,x}w)d\theta
	\end{align*}
	with $\nabla_q\overrightarrow{R}(x,q)=\left(\partial_{q_{j}}R_{i}\right)_{{0}\leq i,j\leq n}$ and $K=(K_{ij})$ with $K_{ij}=\partial_{q_{j}}R_{i}$. 
	
	Introduce the following notations:
	\begin{equation}\label{Notations}
	\begin{aligned}
	\begin{cases}
	\epsilon F(t,x,\nabla_{t,x} u_1,\nabla_{t,x}u_2;\epsilon):=2\epsilon\nabla_{t,x}\cdot((\nabla_{t,x} u_1\cdot\nabla_{t,x} u_2)\vec{b})+\epsilon^2\nabla_{t,x}\cdot(|\nabla_{t,x} u_2|^2\vec{b})\\
	\qquad\qquad\qquad\qquad\qquad\quad\quad\,\,\,+\epsilon^{-2}\nabla_{t,x}\cdot\overrightarrow{R}(t,x,\epsilon\nabla_{t,x} u_1+\epsilon^2\nabla_{t,x} u_2),\\
	\epsilon\Gamma(t,x,\nabla_{t,x} w;\epsilon):=2\epsilon(\vec{b}\otimes\nabla_{t,x} u_1)+2\epsilon^2(\vec{b}\otimes\nabla_{t,x} w)+2\epsilon^2(\vec{b}\otimes\nabla_{t,x} u_2)\\
	\quad\qquad\qquad\qquad\qquad+\epsilon K(t,x,\epsilon\nabla_{t,x} w;\epsilon)+\epsilon^2\mathcal{K}(t,x,\epsilon\nabla_{t,x}w;\epsilon)\nabla_{t,x}w,\\
	\epsilon \overrightarrow{G}(t,x,\nabla_{t,x} w;\epsilon)\cdot\nabla_{t,x}z:=2\epsilon(\nabla_{t,x}^2u_1)\cdot(\vec{b}\otimes\nabla_{t,x}z)+2\epsilon(\nabla_{t,x}\cdot\vec{b})(\nabla_{t,x}u_1\cdot\nabla_{t,x}z)\\
	\quad\qquad\qquad\qquad\qquad+\epsilon^2(\nabla_{t,x}\cdot\vec{b})(\nabla_{t,x}w\cdot\nabla_{t,x}z)+2\epsilon^2(\nabla_{t,x}^{2}u_2)\cdot(\vec{b}\otimes\nabla_{t,x}z)\\
	\quad\qquad\qquad\qquad\qquad+\epsilon^2(\nabla_{t,x}\cdot\vec{b})(\nabla_{t,x}u_{2}\cdot\nabla_{t,x}z)+\epsilon(\nabla_{t,x}\cdot K)\cdot\nabla_{t,x}z,
	\\ 
	B(w)z:=\Delta z-a(x)z+\epsilon\Gamma(t,x,\nabla_{t,x} w;\epsilon)\cdot\nabla_{t,x}^2 z,
	\end{cases}
	\end{aligned}
	\end{equation}

	\noindent
	where \textquotedblleft $\cdot$\textquotedblright={real inner product}, $\otimes=\mbox{tensor product}$,  ${\nabla^2_{t,x}} w=\mbox{Hessian of $w$}$, the $j$-th component of $(\nabla_{t,x}\cdot K)$ is $\sum_{i=0}^n \partial_i K_{ij}$ and
	the $(i,j)$-component of $\mathcal{K}\nabla_{t,x}w$ is $\sum_{l=0}^n\partial_{q_j}K_{il}\partial_l w$. Notice here that $\partial_i$ in $\sum_{i=0}^n \partial_i K_{ij}$ is just acting to the $x_i$ variable of $K_{ij}(t,x,q;\epsilon)$. Also $\Gamma(t,x,\nabla_{t,x};\epsilon)\cdot\nabla_{t,x}^2 w$ is the inner product of the two matrices $\Gamma(t,x,\nabla_{t,x};\epsilon)$ and $\nabla_{t,x}^2 w$.
	
	Then \eqref{equation of w(t,x)} can be written in the following form:
	\begin{align}\label{equation for w in final form}
	\begin{aligned}
	\begin{cases}
	\partial_{t}^{2}w-{B}(w)w-\epsilon \overrightarrow{G}(t,x,\epsilon\nabla_{t,x}w;\epsilon)\cdot\nabla_{t,x}w=\epsilon F(t,x,\nabla_{t,x}u_{1},\nabla_{t,x}u_{2};\epsilon)\,\,\,\text{in $Q_T$},\\
	w(0,x)=\partial_{t}w(0,x)=0\,\,\mbox{in}\,\, \Omega\,\,\text{and}\,\,w(t,x)=0\,\,\mbox{on}\,\, \partial Q_{T}.
	\end{cases}
	\end{aligned}
	\end{align}
	
	Now to complete the proof of Theorem \ref{Unique solvability of direct problem non-linear} it is enough to prove the following. 
	\begin{theorem}\label{Existence of w epsilon}
		Let $m\ge [n/2]+3$ and $(\phi,\psi,f)\in B_M$ satisfying the compatibility condition of order $m-1$. Then, for given $T>0$  there exists $\epsilon_0=\epsilon_0(h, m, M)>0$ and $w=w(t,x;\epsilon)\in X_m$ for $0<\epsilon<\epsilon_0$ such that each $w=w(\cdot,\cdot;\epsilon)$ is the unique solution to the initial boundary value problem
		\eqref{equation for w in final form} with the estimate
		\begin{equation}\label{estimate of w }
		\Vert w\Vert_{X_m}=O(\epsilon)\,\,\,\text{\rm as $\epsilon\rightarrow 0$}.
		\end{equation}
		\end{theorem}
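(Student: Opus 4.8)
The plan is to solve \eqref{equation for w in final form} by a fixed-point (contraction mapping) argument in the Banach space $X_m([0,T])$, exactly along the three-bullet strategy outlined above. First I would set up the linear machinery: for a fixed ``frozen coefficient'' function $U=U(t,x)\in X_m$ with $\Vert U\Vert_{X_m}\le \rho$ (for a radius $\rho$ to be chosen of size $O(\epsilon)$, so that $\epsilon\Gamma(t,x,\epsilon\nabla_{t,x}U;\epsilon)$ is a small perturbation), consider the linear hyperbolic initial boundary value problem
\begin{align*}
\begin{cases}
\partial_t^2 w_{sem}-B(U)w_{sem}-\epsilon\overrightarrow{G}(t,x,\epsilon\nabla_{t,x}U;\epsilon)\cdot\nabla_{t,x}w_{sem}=\epsilon F(t,x,\nabla_{t,x}u_1,\nabla_{t,x}u_2;\epsilon)\,\,\text{in }Q_T,\\
w_{sem}(0,x)=\partial_t w_{sem}(0,x)=0\,\,\text{in }\Omega,\quad w_{sem}(t,x)=0\,\,\text{on }\partial Q_T.
\end{cases}
\end{align*}
Here $B(U)z=\Delta z-a(x)z+\epsilon\Gamma(t,x,\epsilon\nabla_{t,x}U;\epsilon)\cdot\nabla_{t,x}^2 z$ is, for $\epsilon$ small, a second-order elliptic operator in $x$ (and $t$) that is a small perturbation of $\Delta-a$; since $\Gamma$ carries a factor $\epsilon$, the principal part stays uniformly elliptic and the equation stays strictly hyperbolic in $t$. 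The first key step is the \emph{energy estimate}: differentiate the equation $k$ times in $t$ for $0\le k\le m$, multiply by $\partial_t^{(k+1)}w_{sem}$, integrate over $\Omega$, use the symmetry of the principal part together with $W^{m,2}\hookrightarrow C^1$ (which is where $m\ge[n/2]+3$ enters, via Sobolev embedding and the algebra property of $W^{m-j,2}$), Gr\"onwall's inequality, and the compatibility conditions of order $m-1$ to obtain a unique solution $w_{sem}\in X_m$ with
$$
\Vert w_{sem}\Vert_{X_m}\le C\,e^{CT}\Bigl(\Vert \epsilon F\Vert_{X_{m-1}}+(\text{terms absorbed for }\epsilon \text{ small})\Bigr)\le C_1\epsilon,
$$
the last bound because $F$ in \eqref{Notations} is, by the estimate \eqref{estimate of R vector} on $\overrightarrow R$ (the $\epsilon^{-2}\nabla_{t,x}\cdot\overrightarrow R(t,x,\epsilon\nabla_{t,x}u_1+\epsilon^2\nabla_{t,x}u_2)=O(\epsilon)$ in $X_{m-1}$) and the smoothness/flatness of $\vec b$, bounded in $X_{m-1}$ uniformly in $\epsilon$ using the a priori bounds on $u_1,u_2$ from Remark \ref{solvablility with nonzero D-data}. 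Existence for this linear problem is the content cited from Theorem 2.45 of \cite{KKL} and \cite{Dafermos,Evans}, bootstrapped in $m$ exactly as indicated in Remark \ref{solvablility with nonzero D-data}\eqref{wellposedness for u1}.

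The second step is to define $\mathcal T(U):=w_{sem}$ on the closed ball $\mathcal B_\rho:=\{U\in X_m:\Vert U\Vert_{X_m}\le\rho\}$ with $\rho:=2C_1\epsilon$, and to check $\mathcal T(\mathcal B_\rho)\subset\mathcal B_\rho$: this is immediate from the energy estimate above once $\epsilon_0$ is small enough that the ``absorbed'' perturbation terms (those carrying at least one extra $\epsilon$ and a factor of $\Vert U\Vert_{X_m}\le\rho=O(\epsilon)$) are genuinely dominated. The third step is the \emph{contraction} estimate: given $U_1,U_2\in\mathcal B_\rho$, the difference $\delta w:=\mathcal T(U_1)-\mathcal T(U_2)$ solves a linear problem of the same type with zero data and right-hand side equal to $\bigl(B(U_1)-B(U_2)\bigr)\mathcal T(U_2)+\epsilon\bigl(\overrightarrow G(\cdot,\epsilon\nabla_{t,x}U_1;\epsilon)-\overrightarrow G(\cdot,\epsilon\nabla_{t,x}U_2;\epsilon)\bigr)\cdot\nabla_{t,x}\mathcal T(U_2)$; since $\Gamma$, $K$, $\mathcal K$, $\overrightarrow G$ depend on $U$ only through $\epsilon\nabla_{t,x}U$ and are $C^\infty$ in their arguments, the mean value theorem gives that this right-hand side is bounded in $X_{m-1}$ by $C\epsilon\,\Vert \mathcal T(U_2)\Vert_{X_m}\,\Vert U_1-U_2\Vert_{X_m}\le C'\epsilon^2\Vert U_1-U_2\Vert_{X_m}$, so the energy estimate yields $\Vert\delta w\Vert_{X_m}\le \tfrac12\Vert U_1-U_2\Vert_{X_m}$ for $\epsilon<\epsilon_0$. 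The Banach fixed-point theorem then produces a unique fixed point $w\in\mathcal B_\rho$, which is the desired solution of \eqref{equation for w in final form}, and $\Vert w\Vert_{X_m}\le\rho=O(\epsilon)$ gives \eqref{estimate of w }. Uniqueness in all of $X_m$ (not just in $\mathcal B_\rho$) follows by a separate energy/Gr\"onwall argument applied to the difference of two solutions, again using $m\ge[n/2]+3$ to control the nonlinear terms in $C^1$.

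The main obstacle, and where the ``several new arguments'' beyond \cite{NW,Dafermos} are needed, is the energy estimate in the presence of the \emph{time-dependent, second-order} perturbation $\epsilon\Gamma(t,x,\epsilon\nabla_{t,x}U;\epsilon)\cdot\nabla_{t,x}^2 z$ inside $B(U)$: when one commutes $\partial_t^k$ past $B(U)$ one picks up top-order commutators $[\partial_t^k,\epsilon\Gamma\cdot\nabla_{t,x}^2]z$ involving $\partial_t$-derivatives of $\vec b$ and of $U$ that are formally of the same order as the energy being estimated, so one must carefully track that each such term carries a compensating power of $\epsilon$ (from the explicit $\epsilon$ in $\epsilon\Gamma$) and/or a factor $\Vert U\Vert_{X_m}=O(\epsilon)$, so that it can be absorbed into the left-hand side after choosing $\epsilon_0$ small; the flatness of $\vec b$ and of $\overrightarrow R$ at $t=0$ (the $C_{(0)}^\infty$ assumption) is what makes the compatibility conditions at $t=0$ work out at every order $k\le m$. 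A secondary technical point is handling the $t$-derivatives of the mixed space-time Hessian $\nabla_{t,x}^2 z$ (as opposed to a purely spatial second-order operator), which forces one to run the energy method with the full space-time gradient and to use that the coefficient of $\partial_t^2 z$ coming from $\Gamma$ is $O(\epsilon)$, hence does not destroy the normalization of the $\partial_t^2$ term in the wave operator.
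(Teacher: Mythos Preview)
Your overall strategy---freeze the coefficient function $U$, solve a linear problem, then run a fixed-point argument---is the same as the paper's, and your treatment of the energy estimate, the role of $m\ge[n/2]+3$, the $O(\epsilon)$ size of $F$, and the compatibility/flatness issues is essentially right. There is, however, a genuine gap in your \emph{contraction} step.

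You claim that the right-hand side for $\delta w=\mathcal T(U_1)-\mathcal T(U_2)$, namely
\[
\bigl(B(U_1)-B(U_2)\bigr)\mathcal T(U_2)\;=\;\epsilon\bigl(\Gamma(\nabla_{t,x}U_1)-\Gamma(\nabla_{t,x}U_2)\bigr)\cdot\nabla_{t,x}^2\mathcal T(U_2),
\]
is bounded in $X_{m-1}$ by a constant times $\epsilon\,\Vert\mathcal T(U_2)\Vert_{X_m}\,\Vert U_1-U_2\Vert_{X_m}$. But $\nabla_{t,x}^2\mathcal T(U_2)$ lies only in $X_{m-2}$ when $\mathcal T(U_2)\in X_m$, while $\Gamma(\nabla_{t,x}U_1)-\Gamma(\nabla_{t,x}U_2)$ involves $\nabla_{t,x}(U_1-U_2)\in X_{m-1}$; their product is in $X_{m-2}$, not $X_{m-1}$. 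This is the standard \emph{loss of one derivative} that blocks a direct contraction in the top norm for quasilinear hyperbolic problems, and no extra power of $\epsilon$ rescues it: the issue is the regularity class of the right-hand side, not its size. Consequently the energy estimate only delivers $\delta w\in X_{m-1}$, and you cannot close the contraction in $X_m$.

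The paper's remedy is exactly the classical one (cf.\ \cite{Dafermos}): prove \emph{boundedness} of the map $T(U)=w_{sem}$ in the high norm $X_m$ (their Proposition~\ref{Existence of solution to semi-linear equation}, which keeps the problem semilinear in $w_{sem}$ and solves it by Picard iteration), but prove \emph{contraction} only in the low-order metric
\[
\rho(f,g)=\sup_{t\in[0,T]}\bigl(\Vert f(t)-g(t)\Vert_1^2+\Vert\partial_t f(t)-\partial_t g(t)\Vert_0^2\bigr)^{1/2}.
\]
At this level the offending term $(A_{U_1}-A_{U_2})w_2$ is estimated via Lemma~\ref{elliptic part}(3) by $\epsilon\,\sigma(M,\epsilon)\Vert\nabla_{t,x}(U_1-U_2)\Vert_0$, so there is no derivative loss, and Gronwall gives the contraction. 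The ball $Z(M)$ equipped with $\rho$ is complete (boundedness in $X_m$ plus weak compactness), so the Banach fixed-point theorem applies and yields $w\in Z(M)$; the $O(\epsilon)$ bound then comes from the high-norm estimate \eqref{Estimate for w semi-linear} already established. To fix your argument you should either switch to this two-norm scheme, or otherwise explain how you avoid the derivative loss in the $X_m$ contraction.
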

	
	\bigskip
In order to prove this, 
let $Z(M)$ with $M>0$ be the set of $U$ satisfying
\begin{align}\label{Condition on U}
	\begin{aligned}
	\begin{cases}
	U\in X_m=X_m([0,T]),\\
	U(0,x)=\partial_t U(0,x)=0,\,\,x\in\Omega,\\
	U(t,x)=0,\,\,(t,x)\in {\partial Q_{T}},\\
    \Vert U\Vert_{X_m}\le M.
	\end{cases}
	\end{aligned}
	\end{align}

\noindent
Then based on the aforementioned strategy of proof of Theorem \ref{Unique solvability of direct problem non-linear}, we consider for $U\in Z(M)$ the following semilinear wave equation corresponding to the equation \eqref{equation for w in final form}
	\begin{equation}\label{Semilinear for w}
	\begin{aligned}
	\begin{cases}
	\partial_{t}^{2}w_{sem}-B(U)w_{sem}=\epsilon\mathcal{F}(t,x,\nabla_{t,x}w_{sem};\epsilon),\,\,(t,x)\in Q_{T},\\
	w_{sem}(0,x)=\partial_{t}w_{sem}(0,x)=0,\,\, x\in\Omega,\\
	w_{sem}(t,x)=0,\,\, (t,x)\in \partial Q_{T},
	\end{cases}
	\end{aligned}
	\end{equation}
	where 
	\begin{equation}\label{mathcal F}
	\begin{aligned}
	\mathcal{F}(t,x,h;\epsilon):= F(t,x,\nabla_{t,x}u_{1},\nabla_{t,x}u_{2};\epsilon)+\overrightarrow{G}(t,x,h;\epsilon)\cdot h\\
	\end{aligned} 
	\end{equation}
	
	\medskip

	\subsection{Unique solvabilty for the semilinear wave equation \eqref{Semilinear for w}} \label{Solvabilty for semilinear equation}${}$
	\par
	
	In this subsection, we give a proof of the following unique solvability for the semilinear wave equation \eqref{Semilinear for w}. 
\begin{proposition}\label{Existence of solution to semi-linear equation}
		Let {$m\geq[n/2]+3$} be an integer. Then, there exists $\epsilon_{1}>0$ such that the initial boundary value problem \eqref{Semilinear for w} has a unique solution $w_{sem}\in Z(M)$ for each $0<\epsilon<\epsilon_1$
		with the estimate
		\begin{equation}\label{Estimate for w semi-linear}
		\Vert w_{sem}\Vert_{X_m}\le \epsilon Ce^{K T},\,\,
		0<\epsilon<\epsilon_1,
		\end{equation}
		where $C$ and $K$ are positive constants depending on $M$ and $\epsilon_1$.

\end{proposition}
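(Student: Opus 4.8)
The plan is to solve \eqref{Semilinear for w}, for each fixed $U\in Z(M)$, by a Banach fixed point argument, regarding it as a genuinely semilinear perturbation of the linear hyperbolic equation $\partial_t^2 v-B(U)v=g$. \emph{Step 1 (linear theory).} For fixed $U$ the coefficients of the operator $B(U)$ in \eqref{Notations} are $\delta_{ij}$ plus $\epsilon$ times the components of $\Gamma(t,x,\nabla_{t,x}U;\epsilon)$; since $\Vert U\Vert_{X_m}\le M$ and $X_m$ embeds continuously into $C^1(\overline{Q_T})$ (here $m\ge[n/2]+3$ is used), these extra coefficients are $O(\epsilon)$ in the sup-norm. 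In particular, for $0<\epsilon<\epsilon_1$ small the coefficient $1-\epsilon\Gamma_{00}$ of $\partial_t^2$ is bounded away from $0$, and after dividing by it the operator $\partial_t^2-B(U)$ takes the form of a second-order operator whose principal part is an $O(\epsilon)$-perturbation of $\partial_t^2-\Delta$, hence uniformly hyperbolic. Combining this with Remark \ref{solvablility with nonzero D-data} and the linear hyperbolic theory of \cite{KKL,Dafermos,Evans}, adapted to the variable-coefficient principal part, one gets: for every $g\in X_{m-1}$ that is flat at $t=0$ there is a unique $v=L_U g\in X_m$ solving $\partial_t^2 v-B(U)v=g$ with zero initial and boundary data, together with the energy estimate $\Vert v\Vert_{X_m}\le Ce^{KT}\Vert g\Vert_{X_{m-1}}$, where $C$ and $K$ depend only on $M$, $T$ and $\epsilon_1$ (through the bounds on the coefficients of $B(U)$), not on $U$ itself.

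\emph{Step 2 (nonlinear estimates).} Put $\Phi(w):=\mathcal{F}(t,x,\nabla_{t,x}w;\epsilon)$ from \eqref{mathcal F}. Since $u_1,u_2$ solve \eqref{equation for u1 non-linear}, \eqref{equation for u2 non-linear}, Remark \ref{solvablility with nonzero D-data} bounds $\Vert u_1\Vert_{X_m}+\Vert u_2\Vert_{X_m}$ by a constant depending on $M$, and the $w$-independent forcing $F$ — which is in divergence form, $F=\nabla_{t,x}\cdot(\,\cdot\,)$ with argument built from $u_1,u_2,\vec b,\overrightarrow R$ — satisfies $\Vert F\Vert_{X_{m-1}}\le C_0(M)$ (the $\overrightarrow R$-contribution being in fact $O(\epsilon)$ by the cubic bound \eqref{estimate of R vector}). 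Using that $m-1>n/2$, so that the relevant spaces are Banach algebras, together with Moser-type product, composition and commutator estimates — the composition ones applied to the smooth map $\overrightarrow R$, whose derivatives are controlled by \eqref{estimate of R vector} — one obtains, for $\Vert w\Vert_{X_m},\Vert w'\Vert_{X_m}\le M$,
\[
\Vert\Phi(w)\Vert_{X_{m-1}}\le C_0(M)+C_1(M)\Vert w\Vert_{X_m}+C_2(M,\epsilon_1)\Vert w\Vert_{X_m}^2,
\]
\[
\Vert\Phi(w)-\Phi(w')\Vert_{X_{m-1}}\le C_3(M,\epsilon_1)\,\Vert w-w'\Vert_{X_m}.
\]
Crucially, because $\vec b$ and $\overrightarrow R$ belong to $C_{(0)}^\infty((0,T];\cdot)$, i.e.\ are flat at $t=0$, every term of $\Phi(w)$ is flat at $t=0$; hence $\epsilon\Phi(w)$ automatically satisfies the compatibility conditions of order $m-1$ associated with zero data, and $L_U$ applies to it.

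\emph{Step 3 (fixed point).} On the complete metric space $Z(M)$ (a closed ball of $X_m$) consider $\Psi(w):=\epsilon\,L_U\Phi(w)$. For $0<\epsilon<\epsilon_1$ small enough, Steps 1 and 2 give
\[
\Vert\Psi(w)\Vert_{X_m}\le\epsilon\,Ce^{KT}\bigl(C_0(M)+C_1(M)M+C_2(M,\epsilon_1)M^2\bigr)\le M,
\]
so $\Psi$ maps $Z(M)$ into itself (one also checks $\Psi(w)$ has the zero initial and boundary data required of elements of $Z(M)$, which is built into $L_U$), and
\[
\Vert\Psi(w)-\Psi(w')\Vert_{X_m}\le\epsilon\,Ce^{KT}C_3(M,\epsilon_1)\,\Vert w-w'\Vert_{X_m}\le\tfrac12\Vert w-w'\Vert_{X_m}.
\]
The contraction mapping theorem yields a unique $w_{sem}\in Z(M)$ with $\Psi(w_{sem})=w_{sem}$, which is exactly the unique solution of \eqref{Semilinear for w} in $Z(M)$; inserting it into the first display of this step gives $\Vert w_{sem}\Vert_{X_m}\le\epsilon\,Ce^{KT}$ after renaming the constants, i.e.\ \eqref{Estimate for w semi-linear}. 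Finally, a short Gronwall argument applied to the difference of two $X_m$-solutions of \eqref{Semilinear for w} promotes uniqueness from $Z(M)$ to all of $X_m$.

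The step I expect to be the main obstacle is the uniform higher-order energy estimate $\Vert L_U g\Vert_{X_m}\le Ce^{KT}\Vert g\Vert_{X_{m-1}}$ with constants independent of $U\in Z(M)$: one differentiates $\partial_t^2 v-B(U)v=g$ up to $m$ times in $t$, uses the equation to trade each $\partial_t^2$ for $\Delta$ plus lower-order terms, runs the standard energy identity, and absorbs the commutator terms $[\partial_t^k,\epsilon\Gamma(t,x,\nabla_{t,x}U;\epsilon)\cdot\nabla_{t,x}^2]v$ via the Moser/commutator estimates of Step 2 — it is precisely here that the regularity $\Gamma\in X_{m-1}\hookrightarrow C^1(\overline{Q_T})$, hence the hypothesis $m\ge[n/2]+3$, is essential, and the factor $e^{KT}$ is the Gronwall constant. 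The attendant nuisance is the careful Sobolev bookkeeping in Step 2: several terms of $\mathcal{F}$ carry two spatial derivatives of $u_1$ or $u_2$, so one must exploit the divergence structure of $F$ and the cancellations in the energy identity to keep $w_{sem}$ in $X_m$ rather than only in $X_{m-1}$.
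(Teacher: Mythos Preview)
Your proposal is correct and follows essentially the same route as the paper: linear energy estimates for $\partial_t^2-B(U)$ (the paper's Lemma \ref{lem:linear wave}), Moser-type bounds on $\mathcal F$ (the paper's Lemma \ref{lem:composite}), and an iteration argument closed by Gronwall. The only cosmetic difference is that the paper runs an explicit Picard scheme $L[w_j]=\epsilon\widetilde{\mathcal F}(\cdot,\nabla_{t,x}w_{j-1})$ and shows it is Cauchy, then derives \eqref{Estimate for w semi-linear} by a separate energy--elliptic-regularity--Gronwall computation, whereas you package existence and the estimate into a single contraction-mapping step.
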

	
It is convenient to introduce the following notations for the proof of Proposition \ref{Existence of solution to semi-linear equation}. 
We first introduce the notation $\widetilde{B}(U)$. From \eqref{Notations} we have \begin{align*}
	    \begin{aligned}
	    B(U)w&=\Delta w-aw+\epsilon\Gamma(t,x,\nabla_{t,x}U;\epsilon)\cdot\nabla_{t,x}^{2}w\\&=\Delta w-aw+\epsilon\Gamma_{00}(t,x,\nabla_{t,x}U;\epsilon)\PD_{t}^{2}w\\
	    &\,\,+\epsilon\sum_{j=1}^{n}\big(\Gamma_{0j}(t,x,\nabla_{t,x}U;\epsilon)+\Gamma_{j0}(t,x,\nabla_{t,x}U;\epsilon)\big)\PD_{tx_{j}}^{2}w\\
	    & \ \ +\epsilon\sum_{1\leq i\leq n,\ 1\leq j\leq n}\Gamma_{ij}(t,x,\nabla_{t,x}U;\epsilon)\PD^{2}_{x_{i}x_{j}}w\\
	    &=:\epsilon\Gamma_{00}(t,x,\nabla_{t,x}U;\epsilon)\PD_{t}^{2}w+\widetilde{B}(U)w,
	    \end{aligned}
	\end{align*}
where $\Gamma_{ij}$ stands for $(i,j)$ component of the matrix $\Gamma$ and $\partial_{tx_j}:=\partial_t\partial_{x_j}=\partial_0\partial_j$. Note that the indices $0j,\,j0$ of $\Gamma_{0j}, \Gamma_{j0}$ correspond to $\partial_{tx_j}=\partial_0\partial_j=\partial_j\partial_0$. We further introduce the notations $A_U(t),\,L
$ and some other notations. Namely, denote by 
\begin{equation}\label{def AU(t)}
A_{U}(t):=\Big( 1-\epsilon\Gamma_{00}(t,x,\nabla_{t,x}U,\epsilon)\Big)^{-1}\widetilde{B}(U(t))w
\end{equation}
then  
\begin{equation}\label{de L}
Lw:=\Big( 1-\epsilon\Gamma_{00}(t,x,\nabla_{t,x}U,\epsilon)\Big)^{-1}\Big( \PD_{t}^{2}w-B(U(t))w\Big)=\PD_{t}^{2}w-A_{U}(t)w.
\end{equation}
Also let $\|\cdot \|_{m}$ be the norm of the space $W^{m,2}(\Omega)$ and let $W_0^{1,2}(\Omega):=\overline{C_0^{\infty}(\Omega)}^{\Vert\cdot\Vert_0}$ with the space $C_0^\infty(\Omega)$ of infinitely differentiable functions with compact support in $\Omega$. Further we write $\PD_t u = \dot{u}$ and $\PD_t^m u=\overset{(m)}{u}$.
 
		
	\medskip
	We first prove several lemmas which will lead us to give the proof of Proposition \ref{Existence of solution to semi-linear equation}.
	\begin{lemma}\label{elliptic part}
		Let $U$ satisfy \eqref{Condition on U} and restrict $\ve$ to vary in $[0,\ve_0]$ for a fixed small $\ve_0>0$. Then 
		$A_{U}(t)$ has the following properties. 
		\begin{enumerate}[(1)]
			%
			\item There is a constant $\nu >0$ such that 
			\begin{equation}\label{eqn:elliptic-estimate}
			\| v \|_{k+1}\le \nu (\| v \|_{k-1} + \| A_U(t)v \|_{k-1} ), 
			\quad k=0, \cdots ,m-2, 
			\end{equation}
			for any $v \in W_0^{1,2}(\Omega)\cap W^{k+{1},2}(\Omega)$ 
			and $t\in [0,T]$.  
			\item The {coercivity} holds for $A_U$. That is there are positive constants $\chi$, $\lambda$ such that 
			\begin{align}\label{coercivity}
			& -\Big\langle A_U(t)v, v\Big\rangle+\chi \| v\|^2_{0} \ge 
			\lambda \| v\|_1^2, 
			\,\,\,t\in [0,T],\,\,\text{real valued}\,\,v\in W_0^{1,2}(\Omega)\,\, 
			\end{align}
			with the continuous extension of $L^{2}(\Omega)$ inner product giving the pairing $\Big\langle \cdot,\cdot\Big\rangle$ between $W^{-1,2}(\Omega)$ and $W_0^{1,2}(\Omega)$.
			\item There is a continuous function 
			$\sigma :[0,\infty)\times [0,\infty) \to [0,\infty)$ 
			such that for every $M>0$ 
			and every $U$, $\overline{U} \in W^{1,2}(\Omega)$ with 
			{$[ U(t)]_{1}$, $[ \overline{U}(t)]_1 \le M$}, we have 
			\begin{equation}\label{estimate on A_U}
			\| (A_U-A_{\overline{U}}) w \|_{0}\le 
			\epsilon \sigma (M,\epsilon)\cdot 
			\Vert\nabla_{t,x} (U(t)- \overline{U}(t))\Vert_0,\,\,\,{ t\in[0,T]}
			\end{equation}
			for $w \in Z(M)$.
		\end{enumerate}
	
		\begin{proof}
			First of all we note that for (1), (2), the terms in $A_U(t)$ which have $\partial_t$ do not contribute because $v$ is independent of $t$. Then by using the standard elliptic regularity argument, we can have the properties (1) and (2).  
			As for the property (3), we divide $(A_U-A_{\overline U})w$ into three parts. That is by using the definitions of $A_U(t)$, we start estimating $\|(A_U-A_{\overline{U}})w\|_{0}$
			as follows. 
    \begin{equation}\label{estimate of AU-AbarU}
    \begin{array}{ll}
     \|(A_U-A_{\overline{U}})w\|_{0}&=\Bigg\|\left[\Big(1-\epsilon\Gamma_{00}(\nabla_{t,x}U)\Big)^{-1}\widetilde{B}(U)-\Big(1-\epsilon\Gamma_{00}(\nabla_{t,x}\overline{U})\Big)^{-1}\widetilde{B}(\overline{U})\right]w\Bigg\|_{0}\\
     &\le H_1+H_2+H_3,
     \end{array}
    \end{equation}
    where we have suppressed the variables $t,x,\epsilon$ in $\Gamma_{00}(t,x,\nabla_{t,x}U,\epsilon)$ and $H_j,\,j=1,2,3$ are defined as
    \begin{equation}
    \begin{array}{ll}
     H_1:=\Vert (1-\epsilon\Gamma_{00}(\nabla_{t,x} U))^{-1}(1-\epsilon\Gamma_{00}(\nabla_{t,x}\overline  U))^{-1}\big (\widetilde B(U))-\widetilde B(\overline U)\big )w\Vert_0,\\
     H_2:=\epsilon\Vert (1-\epsilon\Gamma_{00}(\nabla_{t,x} U))^{-1}(1-\epsilon\Gamma_{00}(\nabla_{t,x}\overline  U))^{-1}\Gamma_{00}(\nabla_{t,x}U)\big(\widetilde B(\overline U)-\widetilde B(U)\big)w\Vert_0,\\
     H_3:=\epsilon\Vert (1-\epsilon\Gamma_{00}(\nabla_{t,x} U))^{-1}(1-\epsilon\Gamma_{00}(\nabla_{t,x}\overline  U))^{-1}\big(\Gamma_{00}(\nabla_{t,x}U)-\Gamma_{00}(\nabla_{t,x}\overline U)\big)\widetilde B(U)w\Vert_0.
    \end{array}
    \end{equation}
    
    In order to estimate $H_j,\,j=1,2,3$ we introduce the following notation. That is for any matrix $Q=(Q_{ij})$, we define a matrix $Q^\natural=(Q_{ij}^\natural)$ with $Q_{ij}^\natural$ defined as
    \begin{equation}
    Q_{ij}^\natural=
    \left\{
    \begin{array}{ll}
   0,\,\,& i=0, j=0,\\
   Q_{ij},\,\,& \text{otherwise}.
    \end{array}
    \right.
    \end{equation}
    We will only give how to estimate $H_1$ because $H_2, H_3$ can be estimated similarly. By the definition of $\widetilde B(U)$ we have
    \begin{equation}
    \begin{array}{ll}
    \Vert\big(\widetilde{B}(U)-\widetilde{B}(\overline U)w\Vert_0= 2\epsilon^2\Vert (\overrightarrow b\otimes\nabla_{t,x}U)^\natural\cdot\nabla_{t,x}^2w-(\overrightarrow b\otimes\nabla_{t,x}\overline U)^\natural\cdot\nabla_{t,x}^2w\Vert_0\\
    \qquad+\epsilon\Vert\big(K^\natural(\epsilon\nabla_{t,x} U)-\big(K^\natural(\epsilon\nabla_{t,x}\overline U)\big)\nabla_{t,x}^2w\Vert_0+\epsilon^2\Vert\big((\mathcal{K}(\nabla_{t,x}U)\nabla_{t,x}U)^\natural-(\mathcal{K}(\nabla_{t,x}\overline U)\nabla_{t,x}U))^\natural\big)\nabla_{t,x}^2w\Vert_0\\
    \qquad=: I_1+I_2+I_3,
    \end{array}
    \end{equation}
    where
    \begin{equation}
    I_1:=2\epsilon\Vert (\overrightarrow b\otimes\nabla_{t,x}U)^\natural\cdot\nabla_{t,x}^2w-(\overrightarrow b\otimes\nabla_{t,x}\overline U)^\natural\cdot\nabla_{t,x}^2w\Vert_0\le2\epsilon(\Vert\partial_t w\Vert_1+\Vert w\Vert_2)\Vert\overrightarrow b \Vert_{L^\infty(\Omega)}\Vert\nabla_{t,x}(U-\overline U)\Vert_0    
    \end{equation}
    and due to \eqref{estimate of R vector} we have
    \begin{equation}
    \begin{array}{ll}
    I_2:=\epsilon\Vert \big(K^\natural(\epsilon\nabla_{t,x}U)-K^\natural(\epsilon\nabla_{t,x}\overline U)\big)\nabla_{t,x}^2w\Vert_0\\
    \quad\,\,\,=\epsilon\left(\int\limits_{\Omega}\big|\int\limits_{0}^{1}\{\nabla_q\overrightarrow R(\epsilon^2\theta\nabla_{t,x}U)^\natural-\nabla_q\overrightarrow R(\epsilon^2\theta\nabla_{t,x}\overline U)^\natural\}\,d\theta\,\nabla_{t,x}^2w\big|^2\,dx\right)^{1/2}\le C_M'\epsilon^3\Vert\nabla_{t,x}(U-\overline U)\Vert_0^2,\\
    I_3:=\epsilon^2\Vert\big((\mathcal{K}(\nabla_{t,x}U)\nabla_{t,x}U)^\natural-(\mathcal{K}(\nabla_{t,x}\overline U)\nabla_{t,x}U) U)^\natural\big)w\Vert_0
    \le C_M'\epsilon^3\Vert\nabla_{t,x}(U-\overline U)\Vert_0
    \end{array}
    \end{equation}
for some constant $C_M'>0$ depending only on $M$. Here
the estimate of $I_3$ can be obtained similarly as that of $I_2$.

\medskip    
Now we recall the following estimate similar to the one given in Theorem 7.2 of \cite{Mizohata Book} as a lemma.
\begin{lemma}\label{estimate in Mizohata book}
Let $m\ge[n/2]+3$ and $\kappa>0$ be the Sobolev embedding $W^{[n/2]+1,2}(\Omega)\hookrightarrow   C^0(\overline\Omega)$ constant. For a given $C^{m-1}$ function  $f(t,x;z)$ on 	$\mathcal{Q}:=\{(t,x,z)\in [0,T]\times \Omega \times {\bf C}:\,|z|\le \kappa M\}$, we have
\begin{equation}\label{well known estimate}
\| f(t,\cdot ;z)\|_{m-1}\le C_{m-1} M_{m-1}\Big\{ 1+
		\Big(1+\|z(t)\|_{m-2}^{m-2}\Big)\|z(t)\|_{m-1} \Big\},\,\,\,{t\in[0,T]}
\end{equation}
for each integer $m\geq [n/2]+3$, where
\[M_{m-1}:=\max_{|\beta | \le m-1}\sup_{\mathcal{Q}} \left|
		\left(\frac{\PD}{\PD x_1}, \cdots,\frac{\PD}{\PD x_n},\frac{\PD}{\PD z}\right)^{\beta} 
		f(t,x;z) \right|\] 
and a general constant $C_{m-1}$ depending on $m-1$.
\end{lemma}
\begin{proof}
The inequality given in Theorem 7.2 of \cite{Mizohata Book} is for the case $\Omega={\mathbb R}^n$. Nevertheless its argument of proof can be carry over to have \eqref{well known estimate} by noticing the following fact due to the existence of the extension operator $\mathcal{E}: W^{s,2}(\Omega)\rightarrow W^{s,2}({\mathbb R}^n)$ for $s\ge0$ coming from the $C^\infty$ smoothness of $\partial\Omega$. 
$W^{s,2}(\Omega)=H^s(\Omega)\,\,\text{for}\,\,s\ge0$ with equivalence of norms of these spaces, where $H^s(\Omega):=\{\phi|_\Omega: \phi\in H^s({\mathbb R}^n)\}$ with the norm
$\Vert\varphi\Vert_{H^s(\Omega)}:=\min\{\Vert\phi\Vert_{H^s({\mathbb R}^n)}: \phi|_\Omega=\varphi,\,\,\phi\in H^s({\mathbb R}^n)\}$
and $H^s({\mathbb R}^n):=\{\phi\in L^2({\mathbb R}^n):\Vert\phi\Vert_{H^s({\mathbb R^n})}:=\big(\int_{{\mathbb R}^n} (1+|\xi|^2)^s |\widehat\phi(\xi)|^2 d\xi\big)^{1/2}<\infty\} $ with the Fourier transform $\widehat\phi(\xi)$
of $\phi$ (see page 77 of \cite{McLean}).
We emphasize here that the norm of $\vert\varphi\Vert_{H^s(\Omega)}$ is given as the minimum of $\Vert\phi\Vert_{H^s({\mathbb R}^n)}$.
\end{proof}

By Lemma \ref{elliptic part}, we have $\Vert \big(1-\epsilon\Gamma_{00}(\nabla_{t,x}U)\big)^{-1}\Vert_{L^\infty(\Omega)}\le C_M''$ with some constant $C_M''>0$ depending only on $M$. Summing up these estimate we have
$$
H_1\le\epsilon\sigma_1(M,\epsilon)\Vert\nabla_{t,x}(U-\overline U)\Vert_0,
$$
where $\sigma_1(M,\epsilon)$ is defined likewise $\sigma(M,\epsilon)$. This finishes the proof of Lemma \ref{elliptic part}.

		\end{proof}
	\end{lemma}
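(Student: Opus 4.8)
The plan is to establish the three properties separately, using throughout the decisive reduction that $A_U(t)$ is evaluated only on $t$-independent functions $v=v(x)\in W_0^{1,2}(\Omega)$, so that every term of $\widetilde B(U)$ carrying a factor $\partial_t$ (the cross terms with $\Gamma_{0j}+\Gamma_{j0}$) annihilates $v$. What survives is the genuinely elliptic operator
\[
A_U(t)v=\big(1-\epsilon\Gamma_{00}(t,x,\nabla_{t,x}U;\epsilon)\big)^{-1}\Big(\Delta v-a(x)v+\epsilon\sum_{1\le i,j\le n}\Gamma_{ij}(t,x,\nabla_{t,x}U;\epsilon)\,\partial_{x_ix_j}^2 v\Big).
\]
Since $U\in X_m$ with $m\ge[n/2]+3$, at each fixed $t$ the function $\nabla_{t,x}U(t,\cdot)$ lies in $W^{m-1,2}(\Omega)\hookrightarrow C^1(\overline\Omega)$, so the coefficients $\Gamma_{00},\Gamma_{ij}$ are bounded in $x$ with spatial norms controlled uniformly in $t$ through Lemma \ref{estimate in Mizohata book}; moreover, shrinking $\epsilon_0$ ensures $|\epsilon\Gamma_{00}|\le 1/2$, whence the weight $(1-\epsilon\Gamma_{00})^{-1}$ is bounded above and below by positive constants, uniformly for $t\in[0,T]$.

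For properties (1) and (2) I would appeal to standard second-order elliptic theory. The principal symbol of $-A_U(t)$ is $(1-\epsilon\Gamma_{00})^{-1}\big(|\xi|^2+\epsilon\sum_{i,j}\Gamma_{ij}\xi_i\xi_j\big)$, which for $\epsilon\le\epsilon_0$ is squeezed between $\tfrac12|\xi|^2$ and $2|\xi|^2$; hence $A_U(t)$ is uniformly elliptic with Dirichlet boundary condition on $\partial\Omega$, and \eqref{eqn:elliptic-estimate} is the usual elliptic regularity inequality, the constant $\nu$ being uniform in $t$ because the coefficient norms are. For the coercivity \eqref{coercivity} I would write $-\langle A_U(t)v,v\rangle$, integrate the second-order part by parts using $v\in W_0^{1,2}(\Omega)$, and isolate the leading term $\int_\Omega(1-\epsilon\Gamma_{00})^{-1}|\nabla v|^2\,dx\ge c\|\nabla v\|_0^2$; the $\epsilon\Gamma_{ij}$ contribution is $O(\epsilon)\|v\|_1^2$ and is absorbed for small $\epsilon$, while the remaining zeroth-order terms (from $a$ and from differentiating the weight) are dominated by $\chi\|v\|_0^2$ via Young's inequality. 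This yields \eqref{coercivity} with some $\lambda>0$.

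The substance of the lemma is property (3), and the main obstacle is to control the dependence of $A_U$ on $U$ through its nonlinear coefficients while extracting one full power of $\epsilon$. The plan is to split $A_U-A_{\overline U}$ into three pieces, separating the difference of the numerators $\widetilde B(U)-\widetilde B(\overline U)$ from the two differences generated by the weights $(1-\epsilon\Gamma_{00}(\nabla_{t,x}U))^{-1}$ and $(1-\epsilon\Gamma_{00}(\nabla_{t,x}\overline U))^{-1}$; each weight factor is bounded in $L^\infty(\Omega)$ by the remark above, so it suffices to bound $\|(\widetilde B(U)-\widetilde B(\overline U))w\|_0$ and $\|(\Gamma_{00}(\nabla_{t,x}U)-\Gamma_{00}(\nabla_{t,x}\overline U))\widetilde B(U)w\|_0$ for $w\in Z(M)$. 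The first splits into a term linear in $\nabla_{t,x}(U-\overline U)$, bounded directly by $\|\vec{b}\|_{L^\infty(\Omega)}(\|\partial_t w\|_1+\|w\|_2)\|\nabla_{t,x}(U-\overline U)\|_0$, plus terms involving the differences of $K$ and $\mathcal{K}$; for the latter the mean-value representation of $K$ together with the cubic bound \eqref{estimate of R vector} on $\overrightarrow{R}$ supplies both the Lipschitz factor $\|\nabla_{t,x}(U-\overline U)\|_0$ and an extra power of $\epsilon$. All $M$-dependent norms of $w$ and of the coefficients are gathered into a single continuous $\sigma(M,\epsilon)$ via the Moser-type inequality of Lemma \ref{estimate in Mizohata book}, giving \eqref{estimate on A_U}. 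The delicate point, and the real work, is bookkeeping the $\epsilon$-powers: it is the explicit factor $\epsilon$ multiplying $\Gamma$ in \eqref{Notations}---not the smallness of $U$ itself---that produces the claimed prefactor $\epsilon\,\sigma(M,\epsilon)$, so one must track each $\epsilon$ carefully through the three-term decomposition.
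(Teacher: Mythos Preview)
Your proposal is correct and follows essentially the same route as the paper: for (1) and (2) you both observe that the $\partial_t$-terms in $A_U(t)$ vanish on $t$-independent $v$ and then invoke standard elliptic regularity and coercivity, and for (3) you both split $(A_U-A_{\overline U})w$ by separating the difference $\widetilde B(U)-\widetilde B(\overline U)$ from the difference of the weights $(1-\epsilon\Gamma_{00})^{-1}$, bound the weights in $L^\infty$, and handle the $K,\mathcal K$ pieces via the mean-value representation together with the cubic estimate \eqref{estimate of R vector}. The only cosmetic difference is that the paper writes out three terms $H_1,H_2,H_3$ where you indicate two, but the estimates required are the same.
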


	The following Lemma follows from an estimate similar to \eqref{well known estimate}.
	
	\begin{lemma}\label{lem:composite}
	Let
	\begin{equation}\label{mathcal F}
	    \widetilde{\mathcal{F}}(t,x,\nabla_{t,x}w):=\Big( 1-\epsilon\Gamma_{00}(t,x,\nabla_{t,x}U,\epsilon)\Big)^{-1} {\mathcal{F}}\lb t,x, {\nabla_{t,x}} w\rb.
	\end{equation}
Assume that $w\in Z_M$. 
If $m\ge [n/2]+3$,
		then we have 
		\[[\widetilde{\mathcal{F}}(\cdot ,t, {\nabla_{t,x}}w;\epsilon)]_{m-1}\le C
		(1+[w(t)]_m^{m-1}),
		\,\,\, t\in[0,T],\]
where
\begin{equation}\label{eqn:energy}
[w(t)]_m^2:=\displaystyle\sum_{j=0}^m\Vert\partial_t^j w(t)\Vert_{m-j}^2
\end{equation}
and $C>0$ is a general constant depending only on $M$.
\end{lemma}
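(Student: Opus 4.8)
The plan is to reduce the estimate for $\widetilde{\mathcal F}$ to repeated applications of the Moser-type inequality in Lemma \ref{estimate in Mizohata book}. First I would observe that by \eqref{mathcal F} and the definitions in \eqref{Notations}, $\widetilde{\mathcal F}(t,x,\nabla_{t,x}w)$ is a sum of terms of three types: (i) the prefactor $(1-\epsilon\Gamma_{00}(t,x,\nabla_{t,x}U,\epsilon))^{-1}$, which depends smoothly on the argument $\epsilon\nabla_{t,x}U$ and on $(t,x)$; (ii) the term $F(t,x,\nabla_{t,x}u_1,\nabla_{t,x}u_2;\epsilon)$, which does not involve $w$ at all and is fixed once $u_1,u_2$ are fixed — here I would use Remark \ref{solvablility with nonzero D-data}\eqref{wellposedness for u1}, \eqref{wellposedness for u2} to note $u_1,u_2\in X_m$, so $F\in X_{m-1}$ and $[F(t)]_{m-1}\le C$ with $C$ depending only on $M$ (and the fixed data $b$, $R$); and (iii) the term $\overrightarrow G(t,x,\nabla_{t,x}w;\epsilon)\cdot\nabla_{t,x}w$, which is the only genuinely $w$-dependent piece and is where the power $[w(t)]_m^{m-1}$ must come from.

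Next I would estimate term (iii). Inspecting the definition of $\overrightarrow G$ in \eqref{Notations}, each summand is a product of: a factor built from $u_1,u_2,\vec b$ (and their $\nabla_{t,x}$-derivatives, which are bounded in the $X_{m-1}$ sense by constants depending on $M$), possibly a factor of the form $\nabla_{t,x}\cdot K$ where $K=K(t,x,\epsilon\nabla_{t,x}w;\epsilon)$ is a smooth function of its arguments, and finally a factor linear in $\nabla_{t,x}w$. Thus, modulo the $w$-independent coefficients, the $w$-dependence of $\overrightarrow G(t,x,\nabla_{t,x}w;\epsilon)\cdot\nabla_{t,x}w$ is of the schematic form $g(t,x,\nabla_{t,x}w)\cdot(\nabla_{t,x}w)$ or $g(t,x,\nabla_{t,x}w)\cdot(\nabla_{t,x}w)^{\otimes 2}$ for a $C^{m-1}$ function $g$. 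Now I would apply the algebra property of $W^{m-1,2}(\Omega)$ (valid since $m-1\ge[n/2]+2>n/2$) together with Lemma \ref{estimate in Mizohata book} applied with $z=\nabla_{t,x}w$: since $w\in Z(M)$, the argument stays in the cylinder $\mathcal Q$ after shrinking $\epsilon_0$, so $\|g(t,\cdot,\nabla_{t,x}w)\|_{m-1}\le C(1+\|\nabla_{t,x}w(t)\|_{m-2}^{m-2}\|\nabla_{t,x}w(t)\|_{m-1})\le C(1+[w(t)]_m^{m-1})$, using $\|\nabla_{t,x}w(t)\|_{k}\le[w(t)]_{k+1}\le[w(t)]_m$ from \eqref{eqn:energy}. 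Multiplying by the remaining $\|\nabla_{t,x}w(t)\|_{m-1}\le[w(t)]_m$ factor(s) and by the bounded $w$-independent coefficients, and absorbing lower powers into $1+[w(t)]_m^{m-1}$, yields $[\overrightarrow G(\cdot,t,\nabla_{t,x}w;\epsilon)\cdot\nabla_{t,x}w]_{m-1}\le C(1+[w(t)]_m^{m-1})$.

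Finally I would handle the prefactor $(1-\epsilon\Gamma_{00})^{-1}$: since $\Gamma_{00}=\Gamma_{00}(t,x,\nabla_{t,x}U,\epsilon)$ depends on $U$ (not $w$) and on $(t,x)$ smoothly, with $U\in Z(M)$ fixed, Lemma \ref{estimate in Mizohata book} and the $W^{m-1,2}$ algebra property give $\|(1-\epsilon\Gamma_{00}(t,\cdot))^{-1}\|_{m-1}\le C(M)$ after shrinking $\epsilon_0$ so that $\epsilon\|\Gamma_{00}\|_{L^\infty}\le1/2$ (this bound is already invoked via Lemma \ref{elliptic part}). Multiplying the $W^{m-1,2}$-bound for the prefactor by the $W^{m-1,2}$-bound for $\mathcal F=F+\overrightarrow G\cdot\nabla_{t,x}w$ and using the algebra property once more gives $[\widetilde{\mathcal F}(\cdot,t,\nabla_{t,x}w;\epsilon)]_{m-1}\le C(1+[w(t)]_m^{m-1})$ for $t\in[0,T]$, with $C$ depending only on $M$, as claimed. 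The main obstacle — really a matter of careful bookkeeping rather than a deep difficulty — is organizing term (iii): making sure that in each summand of $\overrightarrow G\cdot\nabla_{t,x}w$ exactly the right number of $\nabla_{t,x}w$ factors is peeled off so that the composite-function estimate is applied to a genuinely $C^{m-1}$ function of $\nabla_{t,x}w$ and the residual linear factors only contribute one extra power of $[w(t)]_m$ each, keeping the total power at $m-1$ rather than $m$.
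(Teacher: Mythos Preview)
Your proposal is correct and reaches the same conclusion, but the paper takes a more streamlined route. Rather than decomposing $\widetilde{\mathcal F}$ into the three pieces (i)--(iii) and estimating each separately via the $W^{m-1,2}$ algebra property, the paper observes that $\widetilde{\mathcal F}(t,x,z)$ is, once $U,u_1,u_2$ are frozen, a single $C^{m-1}$ function of $(t,x,z)$ on the relevant cylinder. It then invokes a space--time analogue of Lemma~\ref{estimate in Mizohata book} (justified by the closure of $X_r$ under $\partial_{t,x}^\alpha$) in one shot to obtain
\[
[\widetilde{\mathcal F}(\cdot,t,z;\epsilon)]_{m-1}\le C\big\{1+\big(1+[z(t)]_{m-2}^{m-2}\big)[z(t)]_{m-1}\big\},
\]
and finishes with the elementary inequality $1+(1+a)^{m-2}(1+a)\le C(1+a^{m-1})$ before substituting $z=\nabla_{t,x}w$. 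Your piecewise approach trades this algebraic trick for repeated use of the algebra property together with the bound $[w(t)]_m\le M$ from $w\in Z(M)$ to absorb any surplus power of $[w(t)]_m$; this is equally valid and arguably more transparent about where each term comes from, at the cost of more bookkeeping. One small point you leave implicit is the passage from the purely spatial norm $\Vert\cdot\Vert_{m-1}$ of Lemma~\ref{estimate in Mizohata book} to the mixed norm $[\cdot]_{m-1}$; the paper flags this explicitly by the remark on $X_r$, and you would need the same observation.
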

\begin{proof}
By an argument similar to deriving \eqref{well known estimate}, we have
$$
 [\widetilde{\mathcal{F}}(\cdot ,t,z;\epsilon)]_{m-1}\le C_{m-1}M_{m-1}\left\{1+\Big( 1+[z(t)]_{m-2}^{m-2}\Big)[z(t)]_{m-1}\right\}
$$
with constants $C_{m-1}, M_{m-1}$ as in \eqref{well known estimate}.
This is because the space $X_r$ with non-negative integer $r$ has the property
$$
z\in X_r\Longrightarrow \partial_{t,x}^\alpha\, z\in X_{r-|\alpha|}
$$
for any multi-index $\alpha$ such that $|\alpha|\le r$.
Observe that
\begin{align*}
1+\Big( 1+[z(t)]_{m-2}^{m-2}\Big)[z(t)]_{m-1} & \le 
1+\Big( 1+[z(t)]_{m-1}\Big)^{m-2}\Big( 1+[z(t)]_{m-1}\Big) \\
& = 1+\Big( 1+[z(t)]_{m-1}\Big)^{m-1} \\
& \le C \Big( 1+[z(t)]_{m-1}^{m-1} \Big)
\end{align*}
with a general constant $C>0$.
Then by taking $z={\nabla_{t,x}} w$, we obtain the desired estimate. 
\end{proof}

\begin{lemma}\label{lem:linear wave}
For $S\in X_{m-1}$ consider the following initial boundary value problem
\begin{equation}\label{eqn:linear-wave}
		\begin{cases}
		L[v]= S\,\,\text{in}\,\, Q_{T},\\
		v(0,x)=0,\ \PD_t v(0,x)=0, \ \text{ $x\in\Omega$},\\
		v(t,x)=0,\ (t,x)\in\PD Q_{T}.
		\end{cases}
\end{equation}
If $(0,0,S)$ satisfies the compatibility condition of order $m-1$, there exists a unique solution $v\in X_m$ to \eqref{eqn:linear-wave} with the energy estimate
\begin{equation}\label{eqn:energy}
[v(t)]_m^2\le
C_m\int\limits_{0}^{T}[S(t)]_{m-1}^2\,dt,\,\,t\in[0,T],
\end{equation}
where $C_m>0$ is a general constant depending on $m$.
\end{lemma}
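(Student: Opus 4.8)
The plan is to treat \eqref{eqn:linear-wave} as a linear second order hyperbolic initial boundary value problem governed by the time-dependent operator $A_U(t)$ — which by Lemma \ref{elliptic part} is coercive and satisfies the elliptic estimate \eqref{eqn:elliptic-estimate} — and to prove existence, uniqueness and the energy estimate \eqref{eqn:energy} by the energy method combined with an induction on $m$. For the base case $m=1$ I would rewrite $L[v]=S$ as $(1-\epsilon\Gamma_{00})\partial_t^2 v-\widetilde B(U(t))v=(1-\epsilon\Gamma_{00})S$, pair it in $L^2(\Omega)$ with $\partial_t v$, and integrate by parts in $x$; since $v$ and hence $\partial_t v$ vanish on $\partial\Omega$ no boundary terms appear, the top order term gives $\tfrac12\frac{d}{dt}\int_\Omega(1-\epsilon\Gamma_{00})|\partial_t v|^2\,dx$, the mixed term $\Gamma_{0j}\partial_{tx_j}v\,\partial_t v=\Gamma_{0j}\partial_{x_j}(\tfrac12|\partial_t v|^2)$ integrates by parts into a term bounded by $C\|\partial_t v\|_0^2$, and after symmetrizing $(\Gamma_{ij})_{1\le i,j\le n}$ the purely spatial second order part yields $\tfrac12\frac{d}{dt}$ of a quadratic form in $\nabla_x v$ modulo terms bounded by $C[v]_1^2$; the coercivity \eqref{coercivity} makes that form $\ge\lambda\|v(t)\|_1^2-\chi\|v(t)\|_0^2$. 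Using $v(0)=\partial_t v(0)=0$ and Gr\"onwall's inequality then gives $[v(t)]_1^2\le C_1\int_0^T[S(t)]_0^2\,dt$; existence of a unique $v\in X_1$ follows from a Galerkin scheme in $x$ (the coefficient ODEs are solvable on $[0,T]$ and the estimate is uniform in the approximation), or directly from Theorem 2.45 of \cite{KKL}, and uniqueness from the estimate applied to a difference of solutions.

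For the inductive step, assume the statement for $m-1$. Since $X_{m-1}\subset X_{m-2}$ and compatibility of order $m-1$ implies that of order $m-2$, the inductive hypothesis already gives $v\in X_{m-1}$ solving \eqref{eqn:linear-wave}. To raise the regularity I would differentiate in $t$: $v_k:=\partial_t^k v$ solves $\partial_t^2 v_k-A_U(t)v_k=S_k$ with $S_k:=\partial_t^k S+\sum_{j=1}^k\binom{k}{j}(\partial_t^j A_U)(t)v_{k-j}$, has zero lateral data, and has initial data $v_k(0)$, $\partial_t v_k(0)=v_{k+1}(0)$ read off recursively from the equation at $t=0$ (so $v_0(0)=v_1(0)=0$, $v_2(0)=S(0)$, $v_3(0)=\partial_t S(0)$, and in general $v_k(0)$ is built from $S(0),\dots,\partial_t^{k-2}S(0)$ by $x$-differential operators). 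The compatibility condition of order $m-1$ is exactly the requirement that $v_k(0)\in W_0^{1,2}(\Omega)\cap W^{m-k,2}(\Omega)$ for $0\le k\le m-1$, which legitimizes the base-case energy identity for each $v_k$. Since $m\ge[n/2]+3$ supplies the algebra and embedding properties of $W^{m-2,2}(\Omega)$, the commutator sources obey $\|(\partial_t^j A_U)(t)v_{k-j}(t)\|_0\le C\big([S(t)]_{m-1}+[v(t)]_m\big)$ by Moser-type product estimates, and $[v(t)]_m$ is recovered from $[v(t)]_{m-1}$, $[v_{m-1}(t)]_1$, $\|v_m(t)\|_0$ and norms of $S$ by a downward use of \eqref{eqn:elliptic-estimate} in the form $\|v_k\|_{m-k}\le\nu\big(\|v_k\|_{m-k-2}+\|v_{k+2}-S_k\|_{m-k-2}\big)$, the top spatial order $\|v(t)\|_m$ coming from the $W^{m,2}$ elliptic estimate for $A_U(t)$ (valid since its coefficients lie in $W^{m-1,2}(\Omega)$). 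Running the energy identity for $v_{m-1},v_{m-2},\dots,v_0$ in turn and applying Gr\"onwall yields
\[
[v(t)]_m^2\le C_m\Big(\sum_{k=0}^m\|v_k(0)\|_{m-k}^2+\int_0^T[S(t)]_{m-1}^2\,dt\Big).
\]

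It remains to absorb the initial-data terms into the integral. By the structure of $v_k(0)$ above, $\|v_k(0)\|_{m-k}$ is bounded by a finite sum of norms $\|\partial_t^\ell S(0)\|_r$ with $\ell+r\le m-2$, and for $g\in X_{m-1}$ the fundamental theorem of calculus gives, whenever $r\le m-2$,
\[
\|g(0)\|_r^2\le\tfrac1T\int_0^T\|g(t)\|_r^2\,dt+2\int_0^T\|\partial_t g(t)\|_r\,\|g(t)\|_r\,dt\le C\int_0^T[g(t)]_{m-1}^2\,dt.
\]
Taking $g=\partial_t^\ell S$ (so that $\ell+1+r\le m-1$) bounds every initial-data term by $C\int_0^T[S(t)]_{m-1}^2\,dt$, which is \eqref{eqn:energy}. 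Existence of $v\in X_m$, and not merely the a priori bound, is obtained by carrying all these estimates through the Galerkin approximation and passing to the limit, the compatibility conditions ensuring the approximate initial data converge in the relevant Sobolev norms; uniqueness follows again from \eqref{eqn:energy}. I expect the main difficulty to lie in the bookkeeping of the inductive step — checking that at every level the commutators $(\partial_t^j A_U)v_{k-j}$ and the spatial derivatives supplied by elliptic regularity close the system of differential inequalities with no net loss of derivatives (this is where $m\ge[n/2]+3$ and the $(t,x)$-regularity of the coefficients of $A_U(t)$, inherited from $U\in X_m$ and \eqref{estimate of R vector}, enter via Moser-type estimates), and verifying that the compatibility condition of order $m-1$ is precisely the trace condition needed to run the energy identity for all of $v_0,\dots,v_{m-1}$.
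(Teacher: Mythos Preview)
Your proposal is correct and follows essentially the same route the paper takes: the paper's own proof of this lemma is only a pointer to the standard energy method (citing \cite{Dafermos}, \cite{Evans}, \cite{Wloka}) together with Lemma \ref{elliptic part}, and it defers the details of handling the mixed $\partial_t\partial_{x_j}$ terms to the proof of Proposition \ref{Existence of solution to semi-linear equation}, where the computation you outline (differentiate $m-1$ times in $t$, pair with $2\overset{(m)}{v}$, integrate the antisymmetric part of $A_U$ by parts, then recover the remaining $\|\overset{(k)}{v}\|_{m-k}$ by a downward sweep using the elliptic estimate \eqref{eqn:elliptic-estimate}, and close with Gr\"onwall) is carried out almost verbatim. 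Your additional step of bounding the nonzero initial traces $v_k(0)$ (for $k\ge2$) by $\int_0^T[S]_{m-1}^2$ is the correct way to arrive at the clean form \eqref{eqn:energy}.
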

\begin{proof}
By using Lemma \ref{elliptic part} and handling the terms of $L$ with mixed derivatives $\partial_t\partial_{x_j},\,1\le j\le n$ by integration by parts using the boundary condition to derive an energy estimate, 
it follows from the standard argument that there exists a unique solution to \eqref{eqn:linear-wave} which satisfies the energy estimate \eqref{eqn:energy}  (see  \cite{Dafermos}, \cite{Evans} and \cite{Wloka}). Since we will have a similar situation to estimate the solution $w$ of \eqref{Semilinear for w}, the details about how to handle the mixed derivatives $\partial_t\partial_{x_j},\,1\le j\le n$ can be seen in the proof of Proposition \ref{Existence of solution to semi-linear equation}.
\end{proof}

	\noindent
	{\it Proof of Proposition \ref{Existence of solution to semi-linear equation}.} 
	First we prove the existence of a solution. 
	We simply write \eqref{Semilinear for w} as 
	\begin{equation}\label{eqn:semi-2}
	\begin{cases}
	L[w]=\epsilon\widetilde{\mathcal{F}}(t,x,\nabla_{t,x}w),
	\text{ $(t,x)\in Q_T$}, \\
	w(0,x)=\PD_t w(0,x)=0, \ \text{ $x\in\Omega$},\\
	w(t,x)=0, \ (t,x)\in\PD Q_{T}.
	\end{cases}
	\end{equation}
	In order to solve \eqref{eqn:semi-2}, we define a series of functions $\{w_j\}$ by
	\begin{align*}
	L[w_1] &= \epsilon \widetilde{\mathcal{F}}(t,x, 0), \\
	L[w_2] &= \epsilon \widetilde{\mathcal{F}}\lb t,x, {\nabla_{t,x}} w_1\rb, \\
	& \vdots \\
	L[w_j] &= \epsilon \widetilde{\mathcal{F}}\lb t,x, {\nabla_{t,x}} w_{j-1}\rb,
	\,\,\,j=2,3, \cdots .
	\end{align*}
	We first prove that $w_j\in X_m$ for each $j$ is bounded for any small enough $\epsilon>0$.
	By \eqref{eqn:energy} and 
	Lemma \ref{lem:composite}, if $\sup_{t\in[0,t]}[w_{j-1}(t)]_m\le M$,
	then we have
	\begin{align*}
	[w_j(t)]_{m}^2 &\le \epsilon^2 C \int\limits_{0}^{T} [\widetilde{\mathcal{F}}
	\lb t,x, {\nabla_{t,x}} w_{j-1}\rb]_{m-1}^2 \, dt \\
	& \le \epsilon^2 C \int\limits_{0}^{T} \lb 1+
 [w_{j-1}(t)]_m^{m-1}\rb^2\, dt,\,\,\,t\in[0,T]
	\end{align*}
	with some general constant $C>0$ which may differ by lines and may depend on $m$. 
	By \eqref{eqn:energy} and $$\widetilde{\mathcal{F}}(t,x,0)=\Big(1-\epsilon \Gamma_{00}(t,x,\nabla_{t,x}U;\epsilon)\Big)^{-1}F(x,\nabla_{{t},x} u_1,\nabla_{{t},x} u_2;\epsilon),
	$$
	we have $\sup_{t\in[0,T]}[w_1(t)]_m\le M$ if we take $\epsilon$ small enough. Then, we further take $\epsilon$ small enough if necessary, so that it satisfies
	\[ \epsilon \le \min\left\{\frac{1}{\sqrt{CT}}\frac{M}{1+ M^{m-1}},\, \frac12\right\}.\] Then it is easy to see by induction on $j\ge 2$ that 
	$$\displaystyle\sup_{t\in[0,T]}[w_j(t)]_m\le M,\,\,j\ge2.$$

	Next we prove that $\{w_j(t)\}_{j=1,2,\cdots }$ is a Cauchy sequence. Notice that
	\[ L[w_{j+1}-w_j]=\epsilon\Big\{\widetilde{\mathcal{F}}\lb t,x,{\nabla_{t,x}} w_j\rb
	- \widetilde{\mathcal{F}}(t,x,{\nabla_{t,x}} w_{j-1})\Big\},\]
	and $\{w_j\}$ is bounded. Then by \eqref{eqn:energy} and applying Lemma \ref{lem:composite} to 
	$$
	\begin{array}{ll}
	\widetilde{\mathcal{F}}(t,x,\nabla_{t,x}w_j)-\widetilde{\mathcal{F}}(t,x,\nabla_{t,x}w_{j-1})=\\
	\qquad\qquad\left\{\int\limits_{0}^{1}\nabla_q \widetilde{\mathcal{F}}(t,x,\nabla_{t,x}w_{j-1}+\theta\nabla_{t,x}(w_j-w_{j-1})\,d\theta\right\}\cdot\nabla_{t,x}(w_j-w_{j-1}),
	\end{array}
	$$
	we have 
	\[\Big[w_{j+1}(t)-w_j(t)\Big]_{m}^2 \le \epsilon^2 C \int\limits_0^t
	\Big[w_{j}({s})-w_{j-1}({s})\Big]_{m}^2  \, ds,\,\,t\in[0,T].
	\]
	By the choice of $\epsilon$, this immediately implies that $\{w_j(t)\}$ is a Cauchy sequence with respect to the norm $\sup_{t\in[0,T]}[\,\cdot\,]_m$.
	If we denote the limit of this Cauchy sequence by $w(t)$, then the standard regularity argument gives us that $w\in X_m$
	and it is a solution to
	\eqref{eqn:semi-2}.
	
	Next we prove the estimate \eqref{Estimate for w semi-linear}. 
	Differentiating \eqref{eqn:semi-2}, $m-1$ times with respect to 
	$t$ yields
	\begin{equation}\label{eqn:diffe-w}
	\overset{(m+1)}{w}(t)-A_U(t)\overset{(m-1)}{w}(t)=\sum_{k=1}^{m-1} 
	\begin{pmatrix}
	m-1 \\
	k
	\end{pmatrix}
	\overset{(k)}{A_U}(t)\overset{(m-1-k)}{w}(t)+\epsilon \PD_t^{m-1}
\widetilde{\mathcal{F}}. 
	\end{equation}
	By taking the $\Big\langle \cdot , \cdot \Big\rangle$ product of this 
	identity with $2\overset{(m)}{w}(t)$ and integrating by parts, 
	we have the identity 
	\begin{equation}\label{eqn:e-i}
	\begin{array}{ll}\Big\| \overset{(m)}{w}(t) \Big\|^2_{0} &- \Big\langle A_U(t)
	\overset{(m-1)}{w}(t), \overset{(m-1)}{w}(t)
	\Big\rangle =-\int\limits_0^t\Big\langle \dot{A}_U(\tau)\overset{(m-1)}{w}(\tau), \overset{(m-1)}{w}(\tau) \Big\rangle\, d\tau \\
	&+\int\limits_0^t {\bf A}(U(\tau); \overset{(m-1)}{w}(\tau), 
	\overset{(m)}{w}(\tau))\, d\tau 
	\\
	&+2\epsilon \int\limits_0^t \left\langle \PD_t^{m-1}
	\widetilde{\mathcal{F}}+ 
	\sum_{k=1}^{m-1}
	\begin{pmatrix}
	m-1 \\
	k
	\end{pmatrix}\overset{(k)}{A_U}(\tau)
	\overset{(m-1-k)}{w}(\tau),\overset{(m)}{w}(\tau) 
	\right\rangle \, d\tau,\,\,\,t\in[0,T].
	\end{array}
	\end{equation}
Here ${\bf A}(U(\tau);V(\tau),W(\tau))$ is defined by
$$
{\bf A}(U(\tau);V(\tau),W(\tau)):=\Big\langle
A_{U}(\tau)V(\tau),W(\tau)\Big\rangle-\Big\langle
A_{U}(\tau)W(\tau),V(\tau)\Big\rangle.
$$
and we have used the following identity obtained by integration by parts
\begin{equation}\label{integral identity}
\begin{array}{ll}2\int\limits_0^t\Big\langle A_U(\tau)
W(\tau),\dot W(\tau)\Big\rangle\,d\tau=\Big\langle A_U(\tau)W(t),W(t)\Big\rangle\\
\qquad\qquad-\int\limits_0^t\Big\langle \dot A_U(\tau))W(\tau),W(\tau)\Big\rangle\,d\tau
+\int\limits_0^t{\bf A}(U(\tau);W(\tau),\dot W(\tau))\,d\tau.
\end{array}
\end{equation}

Now we show the inequality
\begin{equation}\label{eqn:energy ineq}
\Big\| \overset{(m)}{w}(t)\Big\|_0^2+\Big\| \overset{(m-1)}{w}(t) \Big\|_1^2 \le
\epsilon^2 C+ K_{\epsilon}\int\limits_0^t \sum_{k=0}^{m}\Big\| \overset{(k)}{w}
(\tau)\Big\|_{m-k}^2 \, d\tau
\end{equation}
for any $t\in [0,T]$ with some general constant $C>0$ and a constant $K_\epsilon>0$ bounded with respect to $\epsilon$. To prove this we give the estimates for 
\newline
(i) $\int\limits_{0}^{t} {\bf A}(U(\tau);W(\tau),\dot W(\tau))\,d\tau$ with $W(\tau)=\overset{(m-1)}w(\tau)$,
\newline
(ii) a quadratic term $\lvert\nabla_{t,x} w(t)\rvert^2$ contained in ${\mathcal{F}}$.

We first deal with (i). Write $A_U(t)$ in the form
$$
A_U(t)\cdot=\widehat A_U(t)+\overrightarrow \ell\cdot\nabla_x
$$
with
$$
\widehat A_U(t):=(\overrightarrow M\cdot\nabla_x)\partial_t\cdot+\nabla_x\cdot(N\nabla_x\cdot),
$$
where $\overrightarrow M=\overrightarrow M(t,x,\nabla_{t,x}U;\epsilon)$ and $\overrightarrow\ell=\overrightarrow\ell(t,x,\nabla_{t,x}U;\epsilon)$ are real vectors and $N=N(t,x,\nabla_{t,x}U;\epsilon)$ is a positive matrix. Then by integrating by parts, we can have the estimate
\begin{equation}\label{estimate for bf A}
\bigg|\int\limits_{0}^{t}{\bf A}(U(\tau);W(\tau),\dot W(\tau))\,d\tau\bigg|\le C\left\{\big(\Vert\dot W(t)\Vert_0^2+\Vert W(t)\Vert_1^2\big)+
\int\limits_{0}^{t} \big(\Vert\dot W(\tau)\Vert_0^2+\Vert W(\tau)\Vert_1^2\big)\,d\tau\right\}
\end{equation}
with a general constant $C>0$. In fact by defining $\widehat{\bf A}(U(t);W(t),\dot W(t))$ similarly as ${\bf A}(U(t);W(t),\dot W(t))$, we have 
$$
\int\limits_{0}^{t}\widehat{\bf A}(U(\tau);W(\tau),\dot W(\tau))=J_1+J_2
$$
with
$$
\begin{array}{ll}
J_1&:=\int\limits_{0}^{t}\int\limits_{\Omega}\left\{\nabla_x\cdot(N\nabla_x W(\tau))\dot W(\tau)-\nabla_x\cdot(N\nabla_x \dot W(\tau))W(\tau)\right\}\,dx\,d\tau=0,\\
J_2&:=\int\limits_{0}^{t}\int\limits_{\Omega}\left\{(\overrightarrow M\cdot\nabla_x)\dot W(\tau))\dot W(\tau)-(\overrightarrow M\cdot\nabla_x)\ddot W(\tau))W(\tau)\right\}\,dx\,d\tau\\
&=\int\limits_{\Omega}\left\{(\nabla_x\cdot\overrightarrow M)W(t)\dot W(t)+\dot W(t)(\overrightarrow M\cdot\nabla_x)W(t)\right\}\,dx\\
&\quad-\int\limits_{0}^{t}\left\{(\nabla_x\cdot\overrightarrow M)W(\tau)\dot W(\tau)+(\nabla_x\cdot\partial_\tau\overrightarrow M)W(\tau)\dot W(\tau)\right\}\,dx\,d\tau.
\end{array}
$$
Further it is easy to see that $\big|\int\limits_{0}^{t}{\bf A}(U(\tau);W(\tau),\dot W(\tau))\,d\tau\big|$ coming from $\overrightarrow\ell\cdot\nabla_x$ can be absorbed into the second term of the right hand side of \eqref{estimate for bf A}. Hence taking these into account we can have \eqref{estimate for bf A}.

Next we deal with (ii). Let $\epsilon>0$ be small enough such that {$\| \partial_t w(t) \|_{m-1}<1$ and $\| w(t) \|_m <1$}. 
Then from the Sobolev embedding theorem that 
the quadratic term $\lvert\nabla_{t,x} w(t)\rvert^2$ contained in 
${\mathcal{F}}$ is estimated as follows. 	
	\begin{align*}
		\begin{aligned}
	\int\limits_{\Omega}{\Big\lvert\nabla_{t,x}} w(\tau)\Big\rvert^2 \Big|\overset{(m)}{w}(\tau)\Big|\,dx 
	& \leq \sup_{\Omega} | {\nabla_{t,x}} w(\tau) | \cdot \Big\| \overset{(m)}{w}(\tau)\Big\|_{0} 
	\cdot \Big\| {\nabla_{t,x}} w(\tau)\Big\|_{0} \\
	&{  \leq C \Big\| \nabla_{t,x} w(\tau) \Big\|_{m-1} \cdot 
	  \Big\| \overset{(m)}{w}(\tau) \Big\|_0 \cdot 
	  \Big\| \nabla_{t,x}w(\tau)\Big\|_0} \\
	& { \leq C \left\{ \Big\| \overset{(1)}{w}(\tau)\Big\|_{m-1}+  \Big\| w(\tau) \Big\|_{m}\right\} \cdot \Big\| \overset{(m)}{w}(\tau)\Big\|_{0} \cdot 
	\left\{ \Big\| \overset{(1)}{w}(\tau) \Big\|_0 + \Big\| \nabla_x w(\tau)\Big\|_0
	\right\} }\\
	& \leq C\lb\Big\| w(\tau) \Big\|_{m}^{2} 
	{+\Big\| \overset{(1)}{w}(\tau)                \Big\|_{m-1}^2}+\Big\| \overset{(m)}{w}(\tau)\Big\|^2_{0} \rb, 
	\end{aligned}
	\end{align*}
	for any $\tau \in [0,T]$. 
Consequently, 
using identity \eqref{eqn:e-i}, it follows from Lemma 
	\ref{elliptic part} 
	and a straightforward computation 
	(see, e.g., \cite[Theorem 3.1 pp. 274-277] {Dafermos}) 
	that we have 
	\eqref{eqn:energy ineq} for 
	sufficiently small $\epsilon >0$.
	
	To finish the proof we want to derive the estimate
	\begin{equation}\label{key estimate}
	\sum_{k=0}^m\Big\| \overset{(k)}{w}(t) \Big\|_{m-k}^2 \le
	\epsilon^2 C + K_{\epsilon} \int\limits_0^t 
	\sum_{k=0}^m\Big\|\overset{(k)}{w}(\tau)\Big\|_{m-k}^2 \, d\tau,\,\, t\in [0,T]
	\end{equation}
	with a general constant $C>0$ and a constant $K_\epsilon>0$ bounded with respect to $\epsilon$. Once we have this estimate, Gronwall's inequality allows us to prove estimate \eqref{Estimate for w semi-linear}, which implies that 
	solutions are unique. In order to see \eqref{key estimate}, we prove by induction on $\ell=0,1,\cdots,m-1$ the following estimate
	\begin{equation}\label{inductive estimate}
	\sum_{k=\ell}^m\Big\| \overset{(k)}{w}(t) \Big\|_{m-k}^2 \le
	\epsilon^2 C + K_{\epsilon} \int\limits_{0}^{t} 
	\sum_{k=0}^m\Big\|\overset{(k)}{w}(\tau)\Big\|_{m-k}^2 \, d\tau,\,\, t\in [0,T]
	\end{equation}
	with another general constant $C>0$ and another constant $K_\epsilon$. 
	By \eqref{eqn:energy ineq}, we have already proven \eqref{inductive estimate} for $\ell=m-1$. Assume \eqref{inductive estimate} holds for some $1\le\ell\le m-2$. Then we want to show that \eqref{inductive estimate} holds for $\ell-1$.
	We first have from \eqref{eqn:diffe-w}, the following identity
	\begin{equation}\label{eqn:w^l identity}
	\begin{array}{ll}
	-A_U(t)\overset{(\ell-1)}{w}(t) =  -\overset{(\ell+1)}{w}(t)+\\
	\qquad\qquad\qquad\displaystyle\sum_{i=1}^{\ell-1}\int\limits_{0}^{t}
	\begin{pmatrix}
	\ell-1 \\
	i
	\end{pmatrix}\left(\overset{(i)}{A_U}(\tau)\overset{(\ell-i)}{w}(\tau)+\overset{(i+1)}{A_U}(\tau)\overset{(\ell-i-1)}{w}(\tau)\right)\,d\tau
	+\epsilon \PD_t^{ l-1} {\widetilde{\mathcal{F}}}(t),\,\,t \in [0,T].
	\end{array}
	\end{equation}
	Next by using the coercivity of  $A_U$ given in Lemma \ref{elliptic part}, we have the following regularity estimate
	\begin{equation}\label{elliptic regularity estimate}
	\Vert z\Vert_{1+r}\lesssim\Vert z\Vert_1+\Vert g\Vert_{-1+r},\,\,r=0,1,\cdots
	\end{equation}
	for any solution $z\in W_0^{1,2}(\Omega)\cap W^{1+r,2}(\Omega)$ satisfying $-A_U(t)z=g\in W^{-1+r,2}(\Omega)$
	in $\Omega$, where the notation $\lesssim$ stands for $\le$ modulo multiplication by a positive general constant (see Chapter 20, (114) in \cite{Wloka}).
	By using \eqref{elliptic regularity estimate} with $r=m-\ell$ and
	$$\overset{(\alpha)}{w}(t)=\int\limits_{0}^{t}\frac{ (t-\tau)^{s-1}}{(s-1)!}\overset{(\alpha+s)}{w}(\tau)\,d\tau,\,\,
	\alpha,s\in{\mathbb Z},\,\alpha\ge0,\,s\ge1,
	$$ 
	we have from \eqref{eqn:w^l identity} the following estimate
	\begin{equation}\label{m=l to m=l-1 estimate}
	\left\{
	\begin{array}{ll}
	\Vert\overset{(\ell-1)}{w}(t)\Vert_{m-(\ell-1)}^2\lesssim\\
	\quad\Vert\overset{(\ell+1)}{w}\Vert_{m-(\ell+1)}^2+
	\Vert\int\limits_{0}^{t}\frac{(t-\tau)^{m-\ell-1}}{(m-\ell-1)!}\overset{(m-1)}{w}(\tau)\,d\tau\Vert_1^2
	+\Vert\big((\overrightarrow M\cdot\nabla_x)\partial_t+\overrightarrow\ell\cdot\nabla_x\big)\overset{(\ell-1)}w(t)\Vert_{m-(\ell+1)}^2
	\\
	\quad+\displaystyle\sum_{i=1}^{\ell-1}\left\{
	\left(\int\limits_{0}^{t}\frac{(t-\tau)^{i-1}}{(i-1)!}\Vert
	\overset{(\ell-1)} {w}(\tau)\Vert_{m-(\ell-1)}\,d\tau\right)^2+\left(\int\limits_{0}^{t}\frac{(t-\tau)^i}{i!}\Vert\overset{(\ell-1)} {w}(\tau)\Vert_{m-(\ell-1)}\,d\tau\right)^2\right\}+
	\epsilon^2 C.
	\end{array}
	\right.
	\end{equation}
	Here note that
	$$
	\begin{array}{ll}
	\Vert\int\limits_{0}^{t}\frac{(t-\tau)^{m-\ell-1}}{(m-\ell-1)!}\overset{(m-1)}{w}(\tau)\,d\tau\Vert_1^2
	\le \Big(\int\limits_{0}^{t}\frac{(t-\tau)^{m-\ell-1}}{(m-\ell-1)!}\Vert\overset{(m-1)}{w}(\tau)\Vert_1\,d\tau\Big)^2,\\
	\Vert\big((\overrightarrow M\cdot\nabla_x)\partial_t+\overrightarrow\ell\cdot\nabla_x\big)\overset{(\ell-1)}w(t)\Vert_{m-(\ell+1)}^2
	\lesssim \Vert\overset{(\ell)}w(t)\Vert_{m-\ell}^2+
	\left(\int\limits_{0}^{t}\Vert\overset{(\ell)}w(\tau)\Vert_{m-\ell}\,d\tau\right)^2.
	\end{array}
	$$
	Then together with \eqref{inductive estimate} for {$k=\ell+1$ in its right hand side} and a direct computation, we have \eqref{inductive estimate} for $m=\ell-1$. Thus Proposition \ref{Existence of solution to semi-linear equation} is proved. 
\subsection{Proof of Theorem \ref{Existence of w epsilon}.}${}$

Using Proposition \ref{Existence of solution to semi-linear equation}, we have for any small enough $\epsilon>0$, there exists a unique solution $w\in Z(M)$ to \eqref{Semilinear for w}. Thus, the map $T:Z(M)\rightarrow Z(M)$ given by $T(U)=w$ is well-defined, where $w$ is the solution to \eqref{Semilinear for w}. Now the idea is to use the fixed point  argument to prove that for any $\epsilon>0$ small enough, there exists a unique solution $w$ to the initial boundary value problem \eqref{equation for w in final form}. More precisely we will prove that $T:Z(M)\rightarrow Z(M)$ is a contraction mapping. To begin with let $T(U_{i})=w_{i}$ for $i=1,2$, where $w_{i}$ is the solution to semi-linear wave equation \eqref{Semilinear for w} for $U=U_{i}$. Let $W=w_{1}-w_{2}$ and  $V=U_{1}-U_{2}$, then $W$ will satisfy the following initial boundary value problem
\begin{equation}\label{Equation for W non-linear}
\begin{aligned}
\begin{cases}
\partial_{t}^{2}W(t)-A_{U_{1}}(t)W(t)=\left\{A_{U_{1}}(t)-A_{U_{2}}(t)\right\}w_{2}(t) +\epsilon\Big( 1-\epsilon\Gamma_{00}(t,x,\nabla_{t,x}U,\epsilon)\Big)^{-1}\overrightarrow{G}(t,x,\nabla_{t,x}w_{1};\epsilon)\cdot\nabla_{t, x}W(t),\\
\qquad +\epsilon\Big( 1-\epsilon\Gamma_{00}(t,x,\nabla_{t,x}U,\epsilon)\Big)^{-1}\left\{\overrightarrow{G}(t,x,\nabla_{t,x}w_{1};\epsilon)- \overrightarrow{G}(t,x,\nabla_{t,x}w_{2};\epsilon)\right\}\cdot\nabla_{t,x}w_{2}(t),\ (t,x)\in Q_{T},\\
W(0,x)=\partial_{t}W(0,x)=0,\ x\in\Omega,\\
W(t,x)=0,\ (t,x)\in\partial Q_{T}.
\end{cases}
\end{aligned}
\end{equation}
Here we have suppressed $x$ variable if it is clear from the context. Now let $\overrightarrow{\mathcal{G}}:=\Big( 1-\epsilon\Gamma_{00}(t,x,\nabla_{t,x}U,\epsilon)\Big)^{-1}\overrightarrow{G}$. Multiply \eqref{Equation for W non-linear} by $2\partial_{t}W$ and integrate over $[0,t]\times \Omega$, we have
\begin{align*}
\begin{aligned}
&||\dot W(t)||^{2}_{0}-\Big\langle A_{U_{1}}(t)W(t),W(t)\Big\rangle=-\int\limits_{0}^{t}\Big\langle \left(\dot A_{U_{1}}(\tau))\right)W(\tau),W(\tau)\Big\rangle d\tau+\int\limits_0^t{\bf A}(U_{1}(\tau);W(\tau),\dot W(\tau))\,d\tau\\
& \qquad +2\int\limits_{0}^{t}\Big\langle \Big[A_{U_{1}}(\tau)-A_{U_{2}}(\tau)\Big]w_{2}(\tau),\dot W(\tau)\Big\rangle d\tau +2\epsilon \int\limits_{0}^{t}\Big\langle \overrightarrow{\mathcal{G}}(t,x,\nabla_{t,x}w_{1};\epsilon)\cdot\nabla_{t,x}W(\tau),\dot W(\tau)\Big\rangle d\tau\\
& \qquad +2\epsilon \int\limits_{0}^{t}\Big\langle\left\{\overrightarrow{\mathcal{G}}({t},x,\nabla_{t,x}w_{1};\epsilon)- \overrightarrow{\mathcal{G}}({t},x,\nabla_{t,x}w_{2};\epsilon)\right\}\cdot\nabla_{t,x}w_{2}(\tau),\dot W(\tau)\Big\rangle d\tau,
\end{aligned}
\end{align*}
where $\dot A_{U_{1}}(\tau):=\partial_\tau A_{U_{1}}(\tau)$ and
$\dot W(\tau):=\partial_\tau W(\tau)$.

Using the expression  $\overrightarrow{G}$ given in \eqref{Notations}, we have 
\begin{equation*}
\begin{aligned}
\epsilon\lb \overrightarrow{G}(t,x,\nabla_{t,x}w_{1};\epsilon)-\overrightarrow{G}(t,x,\nabla_{t,x}w_{2};\epsilon)\rb 
=\epsilon\lb(\nabla_{t,x}\cdot K)(t,x,\epsilon\nabla_{t,x}w_{1};\epsilon)-(\nabla_{t,x}\cdot K)(t,x,\epsilon\nabla_{t,x}w_{2};\epsilon)\rb.
\end{aligned}
\end{equation*}
Hence by using \eqref{estimate of R vector}, the coercivity \eqref{coercivity} 
and estimate \eqref{estimate on A_U} 
given in Lemma \ref{elliptic part}, we have 
\begin{align*}
\begin{aligned}
\lVert\dot W(t)\rVert^{2}_{0}+\lVert W(t)\rVert_{1}^{2}&\leq \epsilon^2  C\sup_{t\in [0,T]}{ \left\{ \lVert \partial_t V(t) \rVert_0^2+ \lVert V(t)\rVert_{1}^2\right\}} +K_{\epsilon}\int\limits_{0}^{t}\Big( \lVert\dot W(\tau)\rVert^{2}_{0}+\lVert W(\tau)\rVert_{1}^{2}\Big)d\tau 
\end{aligned}
\end{align*}
with a constant $K_\epsilon>0$ bounded with respect to $\epsilon$ and a general constant $C>0$.

Now we equip $Z(M)$ with the metric $\rho$ defined by 
\begin{align*}
\rho(f,g):=\max_{t\in[0,T]}\left\{||f(t)-g(t)||_{1}^{2}+||\partial_{t}f(t)-\partial_{t}g(t)||^{2}_{0}\right\}^{\frac{1}{2}}.
\end{align*}
Finally using  Grownwall's inequality, we have 
\begin{align*}
\rho\lb T(U_{1}),T(U_{2})\rb\leq \epsilon Ce^{K_{\epsilon}T}\rho(U_1,U_2)
\end{align*}
with another general constant $C>0$. This implies that $T$ is a contraction mapping for $\epsilon>0$ small enough. Therefore, for each $\epsilon>0$ small enough, there exists $w\in Z(M)$ such that $T(w)=w$ and it will satisfies the estimate  \eqref{estimate of w } which follows from \eqref{Estimate for w semi-linear}. 
Hence, Theorem \ref{Existence of w epsilon} is proved.

Now Theorem \ref{Unique solvability of direct problem non-linear} follows from Theorem \ref{Existence of w epsilon}.

\noindent

\subsection{Analysis of input-output map in $\epsilon$-expansion}\label{DN map in Epsilon expansion}${}$

Let $\Lambda_{a}$ denote the input-output map corresponding to \eqref{equation for u1 non-linear}. Using the $\epsilon$-expansion  of the solution to \eqref{equation of interest} we  show that $\Lambda_{a}$ can be reconstructed from $\Lambda_{\overrightarrow{C},a}$. In particular, we prove the following lemma. 

\begin{lemma}\label{Reconstruting the DN map for linear equation}
For $m\geq [n/2]+3$ and $(\phi,\psi,f)\in B_{M}$ satisfying the compatibility conditions of order $m-1$, we have 
\begin{align}\label{Difference of DN maps divided by epsilon}
\begin{aligned}
\lim_{\epsilon\to 0}\frac{1}{\epsilon}\Big\|\Lambda_{\overrightarrow{C},a}(\epsilon \phi,\epsilon \psi, \epsilon f)-\epsilon \Lambda_{a}(\phi,\psi,f)\Big\|_{\widetilde X_m}=0,
\end{aligned}
\end{align}
and 
\begin{align}\label{Difference of DN maps divided by epsilon2}
\lim_{\epsilon\to 0}\frac{1}{\epsilon^{2}}\Big\|\Lambda_{\overrightarrow{C},a}(\epsilon \phi,\epsilon \psi, \epsilon f)-\epsilon \Lambda_{a}(\phi,\psi,f)-\epsilon^2\lb\PD_{\nu}u_2+\lb 0,\nu(x)\rb\cdot\vec{b}(t,x)|\nabla_{t,x}u_1(t,x)|^2\rb\Big\|_{\widetilde X_m}=0,
\end{align}
where ${\widetilde X_m}$ is the one defined in Remark \ref{solvablility with nonzero D-data}, (i).
\begin{proof}
	Using the $\epsilon$-expansion of the solution $u$ given in Theorem \ref{Unique solvability of direct problem non-linear}, we have the $\epsilon$-expansion of $\Lambda_{\overrightarrow{C},a}(\epsilon \phi,\epsilon \psi, \epsilon f)$ is given by 
	\begin{align*}
	\begin{aligned}
	&\Lambda_{\overrightarrow{C},a}(\epsilon \phi,\epsilon \psi, \epsilon f)={ \lb \Big[\PD_{\nu}u^{\phi,\psi,f}+(0,\nu(x))\cdot\overrightarrow{{C}}(t,x,\nabla_{t,x}^{\phi,\psi,f}\Big]\Big|_{\PD Q_{T}},u^{\phi,\psi,f}|_{t=T},\PD_{t}u^{\phi,\psi,f}|_{t=T}\rb} \\
	&\ \ \ \ \ {=\epsilon\lb \PD_{\nu}u_{1},u_{1}|_{t=T},\PD_{t}u_{1}|_{t=T}\rb\big|_{\PD Q_{T}}+\epsilon^{2}\lb \PD_{\nu}u_{2}+(0,\nu(x))\cdot\vec{b}(t,x)\lvert\nabla_{t,x}u_{1}\rvert^{2}\rb\Big|_{\PD Q_{T}}+O(\epsilon^{3}),\ \lb \epsilon\to 0\rb.}
	\end{aligned}
	\end{align*}
	Using this we can have \eqref{Difference of DN maps divided by epsilon} and \eqref{Difference of DN maps divided by epsilon2}. 
\end{proof}
\end{lemma}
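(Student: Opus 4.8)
The plan is to substitute the $\epsilon$-expansion of the solution supplied by Theorem~\ref{Unique solvability of direct problem non-linear} into the definition \eqref{definition of NtD map} of the input-output map, expand the single nonlinear term in powers of $\epsilon$ using the structure $\overrightarrow C=\overrightarrow P+\overrightarrow R$ with $\overrightarrow P(t,x,q)=|q|^2\vec b(t,x)$ and $\overrightarrow R=O(|q|^3)$, and simply read off the coefficients of $\epsilon$ and $\epsilon^2$. The two stated limits then follow by dividing the remainder by $\epsilon$ and by $\epsilon^2$. I would handle the three components of the output — the lateral flux trace on $\partial Q_T$, and the two Cauchy data at $t=T$ — separately, and I interpret the $\widetilde X_m$-norm in \eqref{Difference of DN maps divided by epsilon} and \eqref{Difference of DN maps divided by epsilon2} as being applied to the lateral component (the only one that belongs to $\widetilde X_m$), the two $t=T$ components being measured in the natural $L^2$-based Sobolev norms.

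First, recall from Theorem~\ref{Unique solvability of direct problem non-linear} that $u=u^{\phi,\psi,f}$, the solution of \eqref{equation of interest} with data $(\epsilon\phi,\epsilon\psi,\epsilon f)$, satisfies
\[
u=\epsilon u_1+\epsilon^2 u_2+r_\epsilon,\qquad \|r_\epsilon\|_{X_m}=O(\epsilon^3),
\]
where $u_1$ solves \eqref{equation for u1 non-linear} and $u_2$ solves \eqref{equation for u2 non-linear} (both $\epsilon$-independent). Since $m\ge[n/2]+3$, the Sobolev trace theorem (using the smoothness of $\partial\Omega$) makes the map $v\mapsto\big(\PD_\nu v|_{\partial Q_T},\ v|_{t=T},\ \PD_t v|_{t=T}\big)$ continuous from $X_m$ into $\widetilde X_m\times W^{m,2}(\Omega)\times W^{m-1,2}(\Omega)$; applying it to $r_\epsilon$ shows that $r_\epsilon$ contributes only $O(\epsilon^3)$ to each component of $\Lambda_{\overrightarrow C,a}(\epsilon\phi,\epsilon\psi,\epsilon f)$, while the affine-in-$u$ part of \eqref{definition of NtD map} evaluated on $\epsilon u_1+\epsilon^2 u_2$ produces $\epsilon\,\Lambda_a(\phi,\psi,f)$ at first order (note $\PD_\nu u_1=\nu\cdot\nabla_x u_1$ on $\partial Q_T$, which is exactly the first component of $\Lambda_a$) together with the traces of $\epsilon^2 u_2$ at second order.

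The only term that is not affine in $u$ is the nonlinear flux $(0,\nu(x))\cdot\overrightarrow C(t,x,\nabla_{t,x}u)\big|_{\partial Q_T}$ in the first component of \eqref{definition of NtD map}. Writing $\overrightarrow C=\overrightarrow P+\overrightarrow R$ and using $\nabla_{t,x}u=\epsilon\nabla_{t,x}u_1+O(\epsilon^2)$ in $X_{m-1}$, I would establish
\[
(0,\nu)\cdot\overrightarrow C(t,x,\nabla_{t,x}u)=\epsilon^2\,(0,\nu)\cdot\vec b(t,x)\,|\nabla_{t,x}u_1|^2+\rho_\epsilon,\qquad \|\rho_\epsilon\|_{\widetilde X_m}=O(\epsilon^3),
\]
using three ingredients: (i) the identity $|\nabla_{t,x}u|^2-\epsilon^2|\nabla_{t,x}u_1|^2=(\nabla_{t,x}u-\epsilon\nabla_{t,x}u_1)\cdot(\nabla_{t,x}u+\epsilon\nabla_{t,x}u_1)$, whose right side is $O(\epsilon^3)$ in $X_{m-1}$ by the Moser-type product estimates available since $m-1\ge[n/2]+2$; (ii) the bound $\|\overrightarrow R(t,x,\nabla_{t,x}u)\|_{X_{m-1}}\le C\|\nabla_{t,x}u\|_{X_{m-1}}^3=O(\epsilon^3)$, obtained from \eqref{estimate of R vector} together with the composition estimate of Lemma~\ref{estimate in Mizohata book}; and (iii) multiplication by the smooth coefficient $\vec b$ followed by the trace theorem $X_{m-1}\to\widetilde X_m$ to transfer these bulk bounds to $\partial Q_T$.

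Collecting the three components gives
\[
\Lambda_{\overrightarrow C,a}(\epsilon\phi,\epsilon\psi,\epsilon f)=\epsilon\,\Lambda_a(\phi,\psi,f)+\epsilon^2\Big(\big(\PD_\nu u_2+(0,\nu)\cdot\vec b\,|\nabla_{t,x}u_1|^2\big)\big|_{\partial Q_T},\ u_2|_{t=T},\ \PD_t u_2|_{t=T}\Big)+O(\epsilon^3),
\]
the remainder being $O(\epsilon^3)$ in $\widetilde X_m$ on the lateral component and in the corresponding $L^2$-based Sobolev norms on the two components at $t=T$. Dividing by $\epsilon$ and letting $\epsilon\to0$ yields \eqref{Difference of DN maps divided by epsilon}; subtracting in addition the explicit $\epsilon^2$-term in \eqref{Difference of DN maps divided by epsilon2} — which is precisely the lateral component of the $\epsilon^2$-coefficient above — and dividing by $\epsilon^2$ leaves an $O(\epsilon)$ quantity on the lateral component, giving \eqref{Difference of DN maps divided by epsilon2}. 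The one step that requires an actual estimate rather than bookkeeping of Taylor coefficients is the nonlinear-flux bound of the preceding paragraph, i.e. controlling the boundary trace of $(0,\nu)\cdot\overrightarrow C(t,x,\nabla_{t,x}u)$ to order $\epsilon^3$ in $\widetilde X_m$; but given \eqref{estimate of R vector}, Lemma~\ref{estimate in Mizohata book}, and the trace theorem, this is routine, and it is the only place where the regularity threshold $m\ge[n/2]+3$ is used in this lemma.
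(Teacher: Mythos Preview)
Your proposal is correct and follows essentially the same approach as the paper: substitute the $\epsilon$-expansion $u=\epsilon u_1+\epsilon^2 u_2+O(\epsilon^3)$ from Theorem~\ref{Unique solvability of direct problem non-linear} into the definition of $\Lambda_{\overrightarrow C,a}$, expand the nonlinear flux term using $\overrightarrow C=\overrightarrow P+\overrightarrow R$, and read off the coefficients of $\epsilon$ and $\epsilon^2$. The paper's own proof is a two-line sketch doing exactly this, whereas you supply the supporting estimates (trace bounds, Moser-type product estimates, the composition bound for $\overrightarrow R$) that the paper leaves implicit; your added care about which norm applies to which component of the output and where the threshold $m\ge[n/2]+3$ enters is appropriate and not in conflict with the paper.
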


\section{Proof of Theorem $\ref{Uniqueness theorem}$}\label{Proof of uniqueness Theorem}
\setcounter{equation}{0}		
\renewcommand{\theequation}{3.\arabic{equation}}${}$
To prove the theorem, we will use  the $\epsilon$-expansion of the solution $u^{(i)}$ to $\eqref{equation for ui}$. Following Theorem \ref{Unique solvability of direct problem non-linear} we have the $\epsilon$-expansion of the solution $u^{(i)}$ to \eqref{equation for ui} is given by
\begin{align}\label{epsilon approximation of solution of u i}
u^{(i)}(t,x)=\epsilon u^{(i)}_{1}(t,x)+\epsilon^{2}u^{(i)}_{2}(t,x)+O(\epsilon^3).     \end{align}
By the straight forward calculations, we have
\begin{align*}
\left\{\begin{array}{ll}
\partial_{t}^{2}u^{(i)}=\epsilon\partial_{t}^{2}u^{(i)}_{1}+\epsilon^{2}\partial_{t}^{2}u^{(i)}_{2}+O(\epsilon^3),\qquad\qquad\qquad\qquad\qquad\qquad\qquad\qquad\\
a_{i}u^{(i)}=\epsilon a_{i}u_{1}^{(i)}+\epsilon^{2}a_{i}u_{2}^{(i)}+O(\epsilon^{3}),\\
\nabla_{{t},x}u^{(i)}=\epsilon \nabla_{t,x}u^{(i)}_{1}+\epsilon^{2}\nabla_{{t},x}u^{(i)}_{2}+O(\epsilon^3),\qquad\qquad\qquad\qquad\qquad\qquad\qquad\qquad\\
\Delta u^{(i)}={\epsilon \Delta u^{(i)}_{1}+\epsilon^{2}\Delta u^{(i)}_{2}+O(\epsilon^3)},\qquad\qquad\qquad\qquad\qquad\qquad\qquad\qquad\\
\overrightarrow{C}^{(i)}\lb {t},x,\nabla_{{t,}x}u^{(i)}\rb={\epsilon^{2} \lvert\nabla_{t,x}u^{(i)}_{1}\rvert^{2}\vec{b}^{(i)}+O(\epsilon^{3})},\\
\nabla_{{t,}x}\cdot\overrightarrow{C}^{(i)}\lb {t,}x,\nabla_{{t,}x}u^{(i)}\rb= \epsilon^{2}\nabla_{{t,}x}\cdot\lb \Big\lvert\nabla_{{t},x}u_{1}^{(i)}\Big\rvert^{2}\vec{b}\rb+O(\epsilon^{3}).
\end{array}
\right.
\end{align*}
Substitute $\eqref{epsilon approximation of solution of u i}$ into $\eqref{equation for ui}$, and arrange the terms into ascending order of power of $\epsilon$ by using the above calculations. Further setting the coefficients of $\epsilon$ and $\epsilon^2$ equal to zero. Then we have the following equations for $u_{1}^{(i)}$ and $u_{2}^{(i)}$:
\begin{align}\label{equation for u1 i}
\begin{aligned}
\begin{cases}
\partial_{t}^{2}u^{(i)}_{1}(t,x)-\Delta u^{(i)}_{1}(t,x)+a_{i}(x)u_{1}^{(i)}(t,x)=0, \ \   (t,x)\in Q_{T},\\
u^{(i)}_{1}(0,x)=\phi(x),\ \partial_{t}u^{(i)}_{1}(0,x)=\psi(x), \ \  x\in\Omega,\\
u_{1}^{(i)}(t,x)=f(t,x),\ \ (t,x)\in\partial Q_{T},
\end{cases}
\end{aligned}
\end{align}
\begin{align}\label{equation for u2 i}
\begin{aligned}
\begin{cases}
\partial_{t}^{2}u^{(i)}_{2}(t,x)-\Delta u^{(i)}_{2}(t,x)+a_{i}(x)u_{2}^{(i)}(t,x)={\nabla_{{t,}x}\cdot\lb \Big\lvert\nabla_{{t},x}u_{1}^{(i)}(t,x)\Big\rvert^{2}\vec{b}({t},x)\rb}, \  (t,x)\in Q_{T},\\
u^{(i)}_{2}(0,x)=\partial_{t}u^{(i)}_{2}(0,x)=0,\    x\in\Omega,\\
u_{2}^{(i)}(t,x)=0,  \ (t,x)\in \partial Q_{T}.
\end{cases}
\end{aligned}
\end{align}

\subsection{Proof of the uniqueness for $a$}${}$

By knowing  $\Lambda_{\overrightarrow{C}^{(i)},a_{i}}^T(\epsilon\phi,\epsilon\psi,\epsilon f),\,i=1,2$ for any $\lb \phi,\psi,f\rb \in B_M$, and $0<\epsilon<\epsilon_0$,  we do know $\Lambda_{a_{i}}^T,\,i=1,2$ (see  Lemma  \ref{Reconstruting the DN map for linear equation}) and $\Lambda_{\overrightarrow{C}^{(1)},a_{1}}=\Lambda_{\overrightarrow{C}^{(2)},a_{2}}$ gives $\Lambda_{a_{1}}=\Lambda_{a_{2}}$.
Therefore, using the arguments from \cite{Rakesh_Reconstruction} we can reconstruct $a_{i}(x)$ from $\Lambda_{a_{i}}$, and  from \cite{Rakesh_Symes_Uniqueness}, we have $a_{1}=a_{2}$ in $\Omega$. 
We denote this common $a_{i},\,i=1,2$ by $a$, i.e.
\begin{equation}\label{gamma}
a=a_1=a_2\,\,\,\text{in\, $\Omega$}.
\end{equation}
Before closing this subsection, we give some by products of \eqref{gamma}.	Since the given  data $(\phi,\psi,f)$ is the same for $u_1^{(i)},\,i=1,2$, therefore we do know $u_1^{(1)}=u_1^{(2)}\,\,\text{in $Q_T$} $ and we denote this common solution by $u_1=u_1^{\phi,\psi,f}$, i.e.
\begin{equation}\label{u_1}
u_1=u_1^{\phi,\psi,f}= u_1^{(1)}=u_1^{(2)}\,\,\text{in $Q_{T}$}.
\end{equation}

\subsection{Proof of the uniqueness for $\vec{b}({t,}x)$}${}$

Now we abuse the notations to denote $\vec{b}({t,}x):=\vec{b}^{(1)}({t,}x)-\vec{b}^{(2)}({t,}x)$ so that $\overrightarrow{P}({t,}x,q):=\overrightarrow{P}^{(1)}({t,}x,q)-\overrightarrow{P}^{(2)}({t,}x,q)=\lvert q\rvert^{2}\vec{b}({t,}x)$. Also we denote the solutions of $\eqref{equation for u2 i}$   by $u_2^{(i)\phi,\psi,f},\,i=1,2$ with $u_1^{(i)}=u_1^{\phi,\psi,f},\,i=1,2$ and define $u^{\phi,\psi,f}_{2}(t,x):=u_{2}^{(1)\phi,\psi,f}(t,x)-u_{2}^{(2)\phi,\psi,f}(t,x)$. Then, from $\eqref{equation for u1 i}$ and $\eqref{equation for u2 i}$, $u_1(t,x):=u^{\phi,\psi,f}_{1}(t,x)\in X_m$ and $u_2(t,x):=u^{\phi,\psi,f}_{2}(t,x)\in X_m$ are the unique solutions to the following initial boundary value problems:
\begin{align}\label{equation for u 1 in uniqueness case}
\begin{aligned}
\begin{cases}
\partial_{t}^{2}u_{1}(t,x)-\Delta u_{1}(t,x)+a(x)u_{1}(t,x)=0, \   (t,x)\in Q_{T},\\
u_{1}(0,x)=\phi(x),\ \partial_{t}u_{1}(0,x)=\psi(x), \ x\in\Omega,\\
u_{1}(t,x)=f(t,x),\  (t,x)\in\partial Q_{T}
\end{cases}
\end{aligned}
\end{align}
and
\begin{align}\label{equation for u 2 uniqueness case}
\begin{aligned}
\begin{cases}
\partial_{t}^{2}u_{2}(t,x)-\Delta u_{2}(t,x)+a(x)u_{2}(t,x)={\nabla_{{t,}x}\cdot\lb \Big\lvert\nabla_{{t},x}u_{1}(t,x)\Big\rvert^{2}\vec{b}({t},x)\rb}, \ (t,x)\in Q_{T},\\
u_{2}(0,x)=\partial_{t}u_{2}(0,x)=0,\   x\in\Omega,\\
u_{2}(t,x)=0,\ (t,x)\in \partial Q_{T}
\end{cases}
\end{aligned}
\end{align}
respectively.

From \eqref{Difference of DN maps divided by epsilon2}
and Lemma \ref{Reconstruting the DN map for linear equation}, we have that 
\begin{align}\label{equality of Neumann data at epsilon2 level}
{u_{2}|_{t=T}=\PD_{t}u_{2}|_{t=T}=\left[\partial_{\nu}u_{2}(t,x)
+(0,\nu(x))\cdot\vec{b}(t,x)\lvert\nabla_{t,x}u_{1}^{\phi,\psi,f}(t,x)\rvert^{2}\right]\Big|_{\partial Q_{T}}=0,}
\end{align}
where $\partial_\nu u_2$ is the Neumann derivative of $u_2$ given by $\partial_\nu u_2=\nu\cdot\nabla_{x} u_2$ {and $\nu(x)$ stands for the outward unit normal to $\PD\Omega$ at $x\in\PD\Omega$.} Now let $w$ be any solution to the following equation
\begin{align}\label{Equation for w}
\begin{aligned}
\PD_{t}^{2}w(t,x)-\Delta w(t,x)+a(x)w(t,x)=0,\ (t,x)\in Q_{T}.
\end{aligned}
\end{align}
Multiplying \eqref{equation for u 2 uniqueness case} by $w$ and integrating over $Q_{T}$, we have 
\begin{align*}
\begin{aligned}
\int\limits_{Q_{T}}\lb 	\partial_{t}^{2}u_{2}(t,x)-\Delta u_{2}(t,x)+a(x)u_{2}(t,x)\rb w(t,x)dxdt=\int\limits_{Q_{T}}{\nabla_{{t,}x}\cdot\lb \Big\lvert\nabla_{{t},x}u_{1}(t,x)\Big\rvert^{2}\vec{b}({t},x)\rb} w(t,x)dxdt. 
\end{aligned}
\end{align*}
Now using the integration by parts and using \eqref{equality of Neumann data at epsilon2 level}, we have 
\begin{align}\label{Identity with phi psi f}
\int\limits_{Q_{T}}\vec{b}({t},x)\cdot \nabla_{t,x}w({t},x)\lvert \nabla_{{t},x}u_{1}(t,x)\rvert^{2}dxdt=0\
\end{align}
holds for all solutions $u_{1}$ of \eqref{equation for u 1 in uniqueness case} and solutions $w$ of \eqref{Equation for w}.
We remark here that $w$ only needs to satisfy \eqref{Equation for w} is the advantage coming from taking the input-output map as our measurement.

Now let $u_{1}^{\phi_{1}\pm\phi_{2},\psi_{1}\pm\psi_{2},f_{1}\pm f_{2}}$ be solutions to \eqref{equation for u 1 in uniqueness case} when $\phi=\phi_{1}\pm\phi_{2}, \psi=\psi_{1}\pm\psi_{2}$ and $f=f_{1}\pm f_{2}$, respectively. Use the two sets of solution $u^{\phi,\psi,f}_{1}= u_{1}^{\phi_{1}\pm\phi_{2},\psi_{1}\pm\psi_{2},f_{1}\pm f_{2}}$ in \eqref{Identity with phi psi f} and subtract the two sets of equations. Then we have  
\begin{align}\label{Integral identity before GO}
\begin{aligned}
\int\limits_{\Rb^{1+n}}\beta_{w}(t,x)\nabla_{t,x}u_{1}^{\phi_{1},\psi_{1},f_{1}}\cdot\nabla_{t,x}u_{1}^{\phi_{2},\psi_{2},f_{2}}(t,x)dxdt=0, \ \lb \phi_{j},\psi_{j},f_{j}\rb\in B_{M}, \ j=1,2,
\end{aligned}
\end{align}
where $\beta_{w}(t,x)=\chi_{Q_{T}}\vec{b}({t},x)\cdot\nabla_{{t},x}w(t,x)$ with the characteristic function $\chi_{Q_{T}}$ of $Q_{T}$. In deriving the above identity, we have used the fact that $u_{1}^{\phi_{1}\pm\phi_{2},\psi_{1}\pm\psi_{2},f_{1}\pm f_{2}}=u_{1}^{\phi_{1},\psi_{1},f_{1}}\pm u_{1}^{\phi_{2},\psi_{2},f_{2}}$. 

Since the principle term in \eqref{equation for w in final form} has the coefficients which contains the functions involving the solution $u_{1}$ to \eqref{equation for u1 non-linear}, therefore to make the coefficients to be real-valued, we use the the real-valued semi-classical solutions for $u_{1}^{\phi_i,\psi_i,f_i}$, $i=1,2$ in \eqref{Integral identity before GO}.  Now from \cite{Kian damping,KV}, we can have the real-valued semi-classical solutions $u_{1}^{\phi_i,\psi_i,f_i}$, $i=1,2$ given as
\begin{align*}
\begin{aligned}
&u_{1}^{\phi_{1},\psi_{1},f_{1}}=e^{- \lb t+x\cdot \omega\rb /h}\lb \varphi(x+t\omega)+hR_{1}(t,x)\rb,\\
& u_{1}^{\phi_{2},\psi_{2},f_{2}}=e^{\lb t+x\cdot\omega\rb/h}\lb \varphi(x+t\omega)+hR_{2}(t,x)\rb,
\end{aligned}
\end{align*}
where $\omega\in \Sb^{n-1}$, $0<h\leq h_{0}$, $\varphi\in C_{0}^{\infty}(\Rb^{n})$ and $R_{i}(t,x)=R_{i}(t,x;h)$, $i=1,2$ satisfy the estimate
\begin{equation}\label{Extimate for correction term}
\begin{array}{ll}
\lVert R_{i}\rVert_{L^{2}(\overline{Q}_T)}+\lVert h\nabla_{t,x} R_{i}\rVert_{L^{2}(\overline{Q}_T)}\leq C,\,\,\, i=1,2,\,\,0<h\leq h_{0}
\end{array}
\end{equation}
here the constant $C>0$ depends only on $\Omega$, $T$, $a$ and we have suppressed $h$ for each $R_i(t,x)$ for simplicity. 
Using these choices for $u_{1}^{\phi_{1},\psi_{1},f_{1}}$ and $u_{1}^{\phi_{2},\psi_{2},f_{2}}$ in  \eqref{Integral identity before GO}, we have 
\begin{align*}
    \begin{aligned}
&-\frac{2}{h^{2}}\int\limits_{\Rb^{1+n}}\beta_{w}\varphi^{2}(x+t\omega)dxdt+\frac{1}{h}\int\limits_{\Rb^{1+n}}\beta_{w}\varphi(x+t\omega)(1,\omega)\cdot\lb h\nabla_{t,x}R_{1}-h\nabla_{t,x}R_{2}\rb dxdt\\
&\ -\frac{2}{h}\int\limits_{\Rb^{1+n}}\beta_{w}\varphi(x+t\omega)\lb R_{1}+R_{2}\rb dxdt+\int\limits_{\Rb^{1+n}}\beta_{w}\lb \lvert\nabla_{t,x}\varphi\rvert^{2}-2R_{1}R_{2}+h\nabla_{t,x}\varphi\cdot\lb \nabla_{t,x}R_{1}+\nabla_{t,x}R_{2}\rb\rb dxdt\\
& \ \ +\int\limits_{\Rb^{1+n}}\beta_{w}(1,\omega)\cdot\lb R_{2}\nabla_{t,x}\varphi -R_{1}\nabla_{t,x}\varphi+\varphi\nabla_{t,x}R_{1}-\phi\nabla_{t,x}R_{2}-hR_{1}\nabla_{t,x}R_{2}+hR_{2}\nabla_{t,x}R_{1}\rb dxdt=0,
\end{aligned}
\end{align*}
for any solution $w$ of \eqref{Equation for w}, $\omega\in \Sb^{n-1}$ and $\varphi\in C_{0}^{\infty}(\Rb^{n})$. 
Now multiplying by $h^{2}$ and taking $h\rightarrow 0$, we get 
\begin{align*}
\begin{aligned}
\int\limits_{\Rb^{1+n}}\beta_{w}(t,x)\varphi^{2}(x+t\omega)dxdt=0, \ \mbox{for all} \ \omega\in\Sb^{n-1}\ \mbox{and for all}\ \varphi\in C_{0}^{\infty}(\Rb^{n}). 
\end{aligned}
\end{align*}
After substituting $x+t\omega=y$, we get 
\begin{align*}
\int\limits_{\Rb^{n}}\int\limits_{\Rb}\varphi^{2}(y)\beta_{w}(t,y-t\omega)dt \D y=0,\,\,\ \varphi\in C_{0}^{\infty}(\Rb^{n}),\,\, \omega\in \Sb^{n-1}.
\end{align*}
Thus finally we have
\begin{align}\label{Light ray transform}
\int\limits_{\Rb}\beta_{w}(t,y-t\omega)dt=0,\,\,(t,y)\in\Rb^{1+n},\,\,\omega\in \Sb^{n-1}.
\end{align}
For each $\omega\in{\mathbb S}^{n-1}$, take $(r,y)\in{\mathbb R}\times{\mathbb R}^n$ such that $2r+y\cdot\omega=0$. Then $\ell:=(r,y+r\omega)\in (1,\omega)^\perp$. Hence by the change of variable $t=r+s$, we have
\begin{equation}\label{light ray transform}
\int_{\mathbb R}\beta_w(\ell+s(1,\omega))\,ds=0,\,\,\ell\in (1,\omega)^\perp,\,\omega\in{\mathbb S}^{n-1}.
\end{equation}

Based on this we will prove $\beta_{w}(t,y)=0$ in $\Rb^{1+n}$ by using the Fourier-slice theorem (see for example in \cite{Stefanov_Support_Theorem_Lorentzian_Manifold_2017}). We start by considering
\begin{align*}
\begin{aligned}
\widehat{\beta_{w}}(\zeta):=\int\limits_{\Rb^{1+n}}e^{i\zeta\cdot(t,x)}\beta_{w}(t,x)\,dxdt.
\end{aligned}
\end{align*}
Using the decomposition, 
$\mathbb{R}^{1+n}=\mathbb{R}(1,\omega)\oplus \ell$ and Fubini's theorem, we have
\begin{align*}
\wh{\beta_{w}}(\zeta)=\sqrt{2}\int\limits_{(1,\omega)^{\perp}}\int\limits_{\mathbb{R}}\beta_{w}(\ell+s(1,\omega))e^{-\I (\ell +s(1,\omega))\cdot\zeta}\, \D s\, \D \ell.
\end{align*}
By \eqref{Light ray transform} and $\zeta\in (1,\omega)^{\perp}$ implies 
\begin{align*}
\widehat{\beta_{w}}(\zeta)=\sqrt{2}\int\limits_{(1,\omega)^{\perp}}\int\limits_{\mathbb{R}}\beta_{w}(s(1,\omega)+\ell)e^{-i\ell\cdot\zeta}\, \D s\, \D\ell=0.
\end{align*}
Hence $\widehat{\beta_{w}}(\zeta)=0$ for all $\zeta\in\lb 1,\omega\rb^{\perp}$ and  $\omega\in\Sb^{n-1}$. Now since $\cup_{\omega\in\Sb^{n-1}}\lb 1,\omega\rb^{\perp}=\{(t,x): \lvert t\rvert \leq\lvert x\rvert\}$, we have $\widehat{\beta_{w}}(\zeta)=0$ for all space-like vectors $\zeta$, hence using the Paley-wiener theorem, we have $\widehat{\beta_{w}}(\zeta)=0$ for all $\zeta\in\Rb^{1+n}$. Thus we have $\beta_{w}(t,x)=0$ for all $\lb t,x\rb \in\Rb^{1+n}$ and $w$ solutions to \eqref{Equation for w} which gives us  $\vec{b}({t},x)\cdot\nabla_{{t},x}w(t,x)=0$ in $Q_{T}$ for all solution $w$ of \eqref{Equation for w}. { Now to prove that $\vec{b}(t,x)=0$ in $Q_{T}$, we use the following lemma. 
\begin{lemma}\label{Lemma about LI of solutions}
Suppose $n\geq 2$ and $N> \frac{1+n}{2}+2$. There exists solutions $v_{j}\in H^{N}(Q_{T})$, $0\leq j\leq n$ such that \[\mbox{det}\lb \frac{\PD v_{j}}{\PD x_{i}}\rb_{0\leq i,j\leq n}\neq 0\,\,\mbox{\rm a.e. in $Q_{T}$}\]
\begin{proof}
Let us choose $\omega_{j}\in \Sb^{n-1}$ for $0\leq j\leq n$ such that $(1,\omega_{0}),(1,\omega_{1}),(1,\omega_{2}),\cdots,(1,\omega_{n})$ are linearly independent. This can be done for example we can choose $\omega_{0}=\frac{1}{\sqrt{n}}(1,1,\cdots,1)$ and  $\omega_{j}=e_{j}$ for $1\leq j\leq n$ where $e_{j}$ represent the standard basis of $\Rb^{n}$.  
Then it is easy to see that $(1,\omega_{0}),(1,\omega_{1}),\cdots, (1,\omega_{n})$ are linearly independent. Next extending $a(x)$ to a function in $C_0^\infty({\mathbb R}^n)$, we choose the WKB solutions $v_{j}(t,x)$ for $0\leq j\leq n$ of $Lw:=\PD_{t}^{2}w(t,x)-\Delta w(t,x)+a(x)w(t,x)=0$ in ${\mathbb R}^{1+n}$ which take the following form
\begin{align}\label{vj expressions} 
v_{j}(t,x)=e^{i\lambda\lb t+x\cdot\omega_{j}\rb}\sum_{k=0}^{N}\frac{A_{jk}(t,x)}{\lb 2i\lambda\rb^{k}}+R_{j}(t,x)\,\, \mbox{with} \ N>\frac{1+n}{2}+2,\,\,\lambda\gg1
\end{align}
(see for example \cite{Salazar}). 
Observe that 
\begin{align*}
    \begin{aligned}
Lv_{j}=e^{i\lambda\lb t+x\cdot\omega\rb}\left[\lb2i\lambda \mathcal{L}+L\rb\lb A_{j0}(t,x)+\frac{A_{j1}(t,x)}{2i\lambda}+\frac{A_{j2}(t,x)}{\lb2i\lambda\rb^{2}}+\cdots+\frac{A_{jN}(t,x)}{\lb2i\lambda\rb^{N}}+e^{-i\lambda\lb t+x\cdot\omega\rb}R_{j}(t,x)\rb\right]
    \end{aligned}
\end{align*}
where $\mathcal{L}:=\PD_{t}-\omega\cdot\nabla_{x}$ is the transport operator. By equating the terms with same power of $2i\lambda$, we have 
\begin{align*}
    \begin{aligned}
&2i\lambda \mathcal{L}A_{j0}+\lb \mathcal{L}A_{j1}+LA_{j0}\rb+\frac{1}{2i\lambda}\lb \mathcal{L}A_{j2}+LA_{j1}\rb +\cdots+\frac{1}{\lb2i\lambda\rb^{N-1}}\lb \mathcal{L}A_{jN}+LA_{j,N-1}\rb \\
& \ \ \ \ \ \ \ \ +\frac{1}{\lb 2i\lambda\rb^{N}}LA_{jN}+e^{-i\lambda\lb t+x\cdot\omega_{j}\rb}LR_{j}=0.
    \end{aligned}
\end{align*}
Then we have the transport equations for $A_{jk},\,\,0\leq k\leq N$ given as
\begin{align}\label{Transport equations for Aj0}
    \mathcal{L}A_{j0}=0
    \end{align}
and for $1\leq k\leq N$ 
    \begin{align}\label{Transport equations for Ajk}
        \begin{aligned}
     \begin{cases}
    \mathcal{L}A_{jk}=-LA_{j,k-1},\\
    A_{jk}(0,x)=0.
    \end{cases}
        \end{aligned}
    \end{align}

We take $A_{j0}=1$ for the first equation in \eqref{Transport equations for Aj0}. After finding $A_{jk}$ for $0\leq k\leq N$, we take $R_{j}$ as the solution to 
\begin{align*}
    \begin{aligned}
    \begin{cases}
LR_{j}(t,x)=-e^{i\lambda\lb t+x\cdot\omega\rb}\frac{1}{\lb2i\lambda\rb^{N}}LA_{jN}(t,x)\,\,\mbox{for}\   (t,x)\in\Rb^{1+n}\\
R_{j}(t,x)=\PD_{t}R_{j}(t,x)=0\,\,\text{at $t=0$}.
\end{cases}
    \end{aligned}
\end{align*}
Now solving this Cauchy problem for $R_{j}$, we get that $R_{j}\in H^{N}(\Rb^{1+n})$. Hence restricting these solutions to $Q_{T}$ and using the Sobolev embedding theorem, we have $R_{j}$ will satisfy the following estimate $\lVert \nabla_{t,x}R_{j}\rVert_{L^{\infty}(Q_{T})}\leq C$ for some constant $C$ independent of $\lambda$.

Now consider the matrix 
\begin{align*}
\begin{aligned}
A(t,x,\lambda)&:=\lb \lb\frac{\PD v_{j}}{\PD x_{i}}\rb\rb_{0\leq i,j\leq n}\\&=
\begin{bmatrix}
i\lambda e^{i\lambda\lb t+x\cdot\omega_{0}\rb}+\PD_{t}\widetilde{R}_{0}&i\lambda w_{01}e^{i\lambda\lb t+x\cdot\omega_{0}\rb}+\PD_{1}\widetilde{R}_{0}&\cdots&i\lambda w_{0n}e^{i\lambda\lb t+x\cdot\omega_{0}\rb}+\PD_{n}\widetilde{R}_{0}\\
i\lambda e^{i\lambda\lb t+x\cdot\omega_{1}\rb}+\PD_{t}\widetilde{R}_{1}&i\lambda w_{11}e^{i\lambda\lb t+x\cdot\omega_{1}\rb}+\PD_{1}\widetilde{R}_{1}&\cdots&i\lambda w_{1n}e^{i\lambda\lb t+x\cdot\omega_{1}\rb}+\PD_{n}\widetilde{R}_{1}\\
\vdots&\vdots&\cdots&\vdots\\
i\lambda e^{i\lambda\lb t+x\cdot\omega_{n}\rb}+\PD_{t}\widetilde{R}_{n}&i\lambda w_{n1}e^{i\lambda\lb t+x\cdot\omega_{n}\rb}+\PD_{1}\widetilde{R}_{n}&\cdots&i\lambda w_{nn}e^{i\lambda\lb t+x\cdot\omega_{n}\rb}+\PD_{n}\widetilde{R}_{n}
\end{bmatrix},
\end{aligned}
\end{align*}
where $\omega_{ij}$ denote the $j'th$ component in $\omega_{i}\in\Sb^{n-1}$ and $\widetilde{R}_{j}(t,x)=e^{i\lambda\lb t+x\cdot\omega\rb}\sum_{k=1}^{N}\frac{A_{jk}(t,x)}{\lb 2i\lambda\rb^{k}}+R_{j}(t,x)$. Let us denote by $\alpha_{j} :=e^{i\lambda\lb t+x\cdot\omega_{j}\rb}$ for $0\leq j\leq n$, then  matrix $A(t,x,\lambda)$ becomes
\begin{align}\label{Matrix A}
A(t,x,\lambda)=
\begin{bmatrix}
i\lambda \alpha_{0}+\PD_{t}\widetilde{R}_{0}&i\lambda \alpha_{0}\omega_{01}+\PD_{1}\widetilde{R}_{0}&\cdots&i\lambda \alpha_{0}\omega_{0n}+\PD_{n}\widetilde{R}_{0}\\
i\lambda \alpha_{1}+\PD_{t}\widetilde{R}_{1}&i\lambda \alpha_{1}\omega_{11}+\PD_{1}\widetilde{R}_{1}&\cdots&i\lambda \alpha_{1}\omega_{1n}+\PD_{n}\widetilde{R}_{1}\\
\vdots&\vdots&\cdots&\vdots\\
i\lambda \alpha_{n}+\PD_{t}\widetilde{R}_{n}&i\lambda \alpha_{n}\omega_{n1}+\PD_{1}\widetilde{R}_{n}&\cdots&i\lambda \alpha_{n}\omega_{nn}+\PD_{n}\widetilde{R}_{n}
\end{bmatrix}
.
\end{align}

Next we want to show that Det$A(t,x,\lambda)\neq 0$ almost everywhere in $Q_{T}$ for $\lambda\gg1$. 

\noindent
Using the fact that $\lVert \nabla_{t,x}\widetilde{R}_{j}\rVert_{L^{\infty}(Q_{T})}\leq C$ for some constant $C>0$ independent of $\lambda$, we have
\begin{align*}
\begin{aligned}
\lim_{\lambda\rightarrow \infty}\left\lVert \frac{{\rm Det}\, A(t,x,\lambda)}{\lambda^{3}}-\mbox{\rm Det}\begin{bmatrix}
	i\alpha_{0}&i\alpha_{0}\omega_{01}&\cdots&i\alpha_{0}\omega_{0n}\\
	i\alpha_{1}&i\alpha_{1}\omega_{11}&\cdots&i\alpha_{1}\omega_{1n}\\
	\vdots&\vdots&\cdots&\vdots\\
	i\alpha_{n}&i\alpha_{n}\omega_{21}&\cdots&i\alpha_{n}\omega_{nn}\\
	\end{bmatrix}\right\rVert_{L^{2}(Q_{T})}
	=0.
\end{aligned}
\end{align*} 
Therefore we have  that 
\begin{align*}
\begin{aligned}
\frac{{\rm Det}\, A(t,x,\lambda)}{\lambda^{3}}\rightarrow \mbox{\rm Det}\begin{bmatrix}
	i\alpha_{0}&i\alpha_{0}\omega_{01}&\cdots&i\alpha_{0}\omega_{0n}\\
	i\alpha_{1}&i\alpha_{1}\omega_{11}&\cdots&i\alpha_{1}\omega_{1n}\\
	\vdots&\vdots&\cdots&\vdots\\
	i\alpha_{n}&i\alpha_{n}\omega_{21}&\cdots&i\alpha_{n}\omega_{nn}\\
	\end{bmatrix}
	\neq 0\,\, \mbox{as}\ \lambda\rightarrow\infty \ \mbox{in}\  L^{2}(Q_{T}).
\end{aligned}
\end{align*}
Thus we can find a subsquence still denote the same such that 
\begin{align*}
\lim_{\lambda\to\infty} \frac{\mbox{Det}\, A(t,x,\lambda)}{\lambda^{3}}=\mbox{Det}\begin{bmatrix}
	i\alpha_{0}&i\alpha_{0}\omega_{01}&\cdots&i\alpha_{0}\omega_{0n}\\
	i\alpha_{1}&i\alpha_{1}\omega_{11}&\cdots&i\alpha_{1}\omega_{1n}\\
	\vdots&\vdots&\cdots&\vdots\\
	i\alpha_{n}&i\alpha_{n}\omega_{n1}&\cdots&i\alpha_{n}\omega_{nn}\\
	\end{bmatrix}\neq 0\,\,\mbox{pointwise for a.e.} \ (t,x)\in Q_{T}. 
\end{align*}
Hence we conclude that $\mbox{Det}\,A(t,x,\lambda)\neq 0$ for $\lambda\gg1$, a.e. $(t,x)\in Q_{T}$. Thus we have that $\nabla_{t,x}v_{0},\nabla_{t,x}v_{1},\cdots, \nabla_{t,x}v_{n}$ are linearly independent a.e. in $Q_{T}$. This completes the proof of Lemma \ref{Lemma about LI of solutions}. 
\end{proof}
\end{lemma}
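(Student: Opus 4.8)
The plan is to construct the $n+1$ solutions $v_0,\dots,v_n$ by geometric optics with a large parameter $\lambda$, so that the matrix $A(t,x,\lambda)=\bigl(\partial v_j/\partial x_i\bigr)_{0\le i,j\le n}$ (with $\partial_{x_0}=\partial_t$) has an invertible principal part as $\lambda\to\infty$, and then to show that the lower-order corrections cannot destroy invertibility for $\lambda$ large. First I would pick unit vectors $\omega_0,\dots,\omega_n\in\Sb^{n-1}$ for which the space--time covectors $(1,\omega_0),\dots,(1,\omega_n)$ are linearly independent in $\Rb^{1+n}$; the explicit choice $\omega_0=n^{-1/2}(1,\dots,1)$, $\omega_j=e_j$ $(1\le j\le n)$ works, since the $(n+1)\times(n+1)$ matrix $W$ with $j$th row $(1,\omega_{j1},\dots,\omega_{jn})$ then has $\det W=1-\sqrt n\neq 0$ for $n\ge 2$.

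Next, after extending $a$ to $C_0^\infty(\Rb^n)$, I would look for solutions of $Lw:=\partial_t^2w-\Delta w+a(x)w=0$ on $\Rb^{1+n}$ of the WKB form \eqref{vj expressions} with phase $\phi_j:=t+x\cdot\omega_j$ (which solves the eikonal equation since $|\omega_j|=1$), leading amplitude $A_{j0}\equiv1$, the amplitudes $A_{j1},\dots,A_{jN}$ obtained by integrating the transport equations \eqref{Transport equations for Aj0}--\eqref{Transport equations for Ajk} along the flow of $\mathcal L_j=\partial_t-\omega_j\cdot\nabla_x$ (so all $A_{jk}$ are smooth, compactly supported in $x$ on time slabs, with bounds uniform in $\lambda$), and $R_j$ defined as the solution of the Cauchy problem $LR_j=-(2i\lambda)^{-N}e^{i\lambda\phi_j}LA_{jN}$ with $R_j=\partial_tR_j=0$ at $t=0$. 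The crucial estimate is that the residual source has $H^{N-1}$-norm of order $\lambda^{N-1}\cdot\lambda^{-N}=\lambda^{-1}$ on $Q_T$ (the oscillatory factor costing $\lambda^{N-1}$, the prefactor $(2i\lambda)^{-N}$ supplying the gain), so the standard energy estimate for the wave operator with vanishing Cauchy data gives $\|R_j\|_{H^N(Q_T)}=O(\lambda^{-1})$, and since $N>\tfrac{1+n}{2}+2$ the Sobolev embedding $H^N(Q_T)\hookrightarrow C^1(\overline{Q_T})$ yields $\|\nabla_{t,x}R_j\|_{L^\infty(Q_T)}=O(\lambda^{-1})$. Setting $\widetilde R_j:=e^{i\lambda\phi_j}\sum_{k=1}^{N}(2i\lambda)^{-k}A_{jk}+R_j$, so that $v_j=e^{i\lambda\phi_j}+\widetilde R_j$, one then checks $\|\nabla_{t,x}\widetilde R_j\|_{L^\infty(Q_T)}\le C$ with $C$ independent of $\lambda$ (the $k=1$ term supplies the $O(1)$ part, the rest is $O(\lambda^{-1})$).

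With this in hand, the entries of $A(t,x,\lambda)$ in \eqref{Matrix A} are $i\lambda\,\alpha_j\omega_{ji}+\partial_{x_i}\widetilde R_j$, where $\alpha_j=e^{i\lambda\phi_j}$ has modulus one, $\omega_{j0}:=1$, and $\sup_{Q_T}|\partial_{x_i}\widetilde R_j|\le C$. A cofactor expansion shows the top-order term of $\det A$ is $(i\lambda)^{n+1}\bigl(\prod_{j=0}^n\alpha_j\bigr)\det W$, while every other term carries at least one factor $\partial_{x_i}\widetilde R_j$ and is thus $O(\lambda^n)$ in $L^\infty(Q_T)$; hence $\lambda^{-(n+1)}\det A(\cdot,\cdot,\lambda)$ differs from $(i)^{n+1}\det(\alpha_j\omega_{ji})_{0\le i,j\le n}$ by $O(\lambda^{-1})$ in $L^\infty(Q_T)$, hence also in $L^2(Q_T)$. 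Since $\bigl|\det(i\alpha_j\omega_{ji})\bigr|=\bigl|\prod_j\alpha_j\bigr|\,|\det W|=|\det W|>0$ pointwise, for $\lambda$ large enough $\bigl|\lambda^{-(n+1)}\det A(t,x,\lambda)\bigr|\ge\tfrac12|\det W|$ for a.e.\ $(t,x)\in Q_T$ (equivalently: pass to a subsequence $\lambda_k\to\infty$ along which the difference tends to $0$ a.e.\ and use the same lower bound). Fixing one such $\lambda$ gives $\det A\neq0$ a.e.\ in $Q_T$, i.e.\ $\nabla_{t,x}v_0,\dots,\nabla_{t,x}v_n$ are linearly independent a.e., and restricting the $v_j$ to $Q_T$ (where they lie in $H^N(Q_T)$) finishes the proof.

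The only genuinely delicate point is the uniform-in-$\lambda$ remainder estimate: one must carry the WKB expansion far enough ($N$ terms) that the residual source is $O(\lambda^{-1})$ in $H^{N-1}$ while keeping enough Sobolev regularity — precisely what the hypothesis $N>\tfrac{1+n}{2}+2$ provides — to control $\nabla_{t,x}R_j$ in $L^\infty$. Everything else reduces to the classical energy estimate for the wave equation, the transport-equation construction of the $A_{jk}$, and a determinant expansion.
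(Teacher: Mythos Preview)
Your proof is correct and follows essentially the same WKB/geometric-optics strategy as the paper: the same choice of directions $\omega_j$, the same transport hierarchy for the amplitudes, the same Cauchy problem for the remainder $R_j$, and the same determinant analysis of the Jacobian matrix. Your version is in fact slightly cleaner---you use the correct normalization $\lambda^{-(n+1)}$ (the paper's $\lambda^{-3}$ is a leftover from the case $n=2$) and obtain the nonvanishing of $\det A$ via a uniform $L^\infty$ bound rather than the paper's $L^2$-convergence plus subsequence extraction.
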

Recall that \[\label{Final identity about vector b}\vec{b}(t,x)\cdot \nabla_{t,x}w(t,x)=0\,\,\mbox{for a.e.}\,\, (t,x)\in Q_{T} \ \mbox{and any solution $w$ to \eqref{Equation for w}}.\]Now using Lemma \ref{Lemma about LI of solutions}, we can choose $w_{0},w_{1},\ \cdots w_{n}$ solutions to \eqref{Equation for w} such that $\nabla_{t,x}w_{0},\nabla_{t,x}w_{1},\cdots,\nabla_{t,x}w_{n}$ are linearly independent for a.e. in $Q_{T}$. 
Using these choices of $w_{j}$ for $0\leq j\leq n$ in \eqref{Final identity about vector b}, we get 
$\vec{b}(t,x)=0, $ for a.e. $(t,x)\in Q_{T}$ but $\vec{b}\in C^{\infty}(Q_{T})$ therefore we have $\vec{b}(t,x)=0$ for all $(t,x)\in Q_{T}$. Hence $\vec{b}^{(1)}=\vec{b}^{(2)}$ in $Q_{T}$. This completes the proof of uniqueness for $\vec{b}$. }


\section*{Acknowledgement}
The second author would like to thank his Ph.D. supervisor Venky Krishnan for stimulating discussions. We also thank the several research funds which supported this study and they are as follows.  The first author was partially supported by Grant-in-Aid for Scientific Research (15K21766) of the Japan Society for the Promotion of Science for doing the research of this paper.  The  work of second author was partially supported by Grant-in-Aid for Scientific Research (15H05740 and 19K03554) of the Japan Society for the Promotion of Science for doing the research of this paper. He also  benefited from the support of Airbus Group Corporate Foundation Chair ``Mathematics of Complex Systems'' established at TIFR Centre for Applicable Mathematics  and  TIFR  International  Centre  for  Theoretical  Sciences,  Bangalore, India. 
The third author was partially supported by Grant-in-Aid for Scientific Research (19K03617) of the Japan Society for the Promotion of Science.


\end{document}